\pgfplotsset{compat=1.18}
\newcommand{\C}{\mathbb{C}}
\newcommand{\N}{\mathbb{N}}
\newcommand{\cB}{\mathcal{B}}
\newcommand{\cU}{\mathcal{U}}
\newcommand{\cM}{\mathcal{M}}
\newcommand{\LL}{\Lambda}
\newcommand{\GG}{\Gamma}
\newcommand{\Tr}{\mathrm{Tr}}
\newcommand{\actson}{{\, \curvearrowright \,}}
\newcommand{\aactson}[1]{{\, \curvearrowright^{#1} \,}}
\newtheorem{thm}{Theorem}[section]
\newtheorem{prop}[thm]{Proposition}
\newtheorem{lem}[thm]{Lemma}
\theoremstyle{definition}
\newtheorem{defn}[thm]{Definition}
\newtheorem{defn/lem}[thm]{Definition/Lemma}
\newtheorem{rem}[thm]{Remark}
\newcommand{\bE}{{\mathbb E}}
\newcommand{\cH}{\mathcal H}
\newcommand{\cK}{\mathcal K}
\newcommand{\cF}{\mathcal F}
\newcommand\Ad{\operatorname{Ad}}
\newcommand\Aut{\operatorname{Aut}}
\newcommand\id{\operatorname{id}}
\newcommand\Span{\operatorname{Span}}
\newcommand{\ovt}{\, \overline{\otimes}\,}
\newcommand{\dpg}{\Delta_p^\Gamma}
\newcommand{\dpl}{\Delta_p^\Lambda}
\newcommand{\talpha}{\tilde{\alpha}}
\title{von Neumann Orbit Equivalence}
\author{Ishan Ishan}
\author{Aoran Wu}
\address{Department of Mathematics, University of Nebraska--Lincoln, 203 Avery Hall, Lincoln, NE 68588, USA}
\email{fishan2@nebraska.edu}
\address{Department of Mathematics, University of Virginia, Kerchof Hall, 141 Cabell Dr, Charlottesville, VA 22903, USA}
\email{aw5dk@virginia.edu}
\def\MR#1{}
\begin{document}

\begin{abstract}
We generalize the notion of orbit equivalence to the non-commutative setting by introducing a new equivalence relation on groups, which we call von Neumann orbit equivalence (vNOE). We prove the stability of this equivalence relation under taking free products and graph products of groups. To achieve this, we introduce von Neumann orbit equivalence of tracial von Neumann algebras, show that two countable discrete groups $\Gamma$ and $\Lambda$ are vNOE if and only if the corresponding group von Neumann algebras $L\Gamma$ and $L\Lambda$ are vNOE, and that vNOE of tracial von Neumann algebras is stable under taking free products and graph products of tracial von Neumann algebras.
\end{abstract}

\maketitle

\section{Introduction}

Let $\Gamma$ and $\Lambda$ be two countable discrete groups with free probability measure-preserving actions $\Gamma\actson (X,\mu)$ and $\Lambda\actson(Y,\nu)$ on standard probability measure spaces $(X,\mu)$ and $(Y,\nu)$, respectively. An \textit{orbit equivalence} (OE) for the actions is a measurable isomorphism $\theta:X\to Y$ such that $\theta(\Gamma x)=\Lambda\theta(x)$ for almost every $x\in X$. In this case, the two actions are called \textit{orbit equivalent}. Two groups are said to be \textit{orbit equivalent} if they admit orbit equivalent actions. Singer \cite{Si55} showed that for two free probability measure preserving actions $\Gamma\actson(X,\mu)$ and $\Lambda\actson(Y,\nu)$, being orbit equivalent is equivalent to the existence of an isomorphism $L^\infty(X)\rtimes\Gamma\cong L^\infty(Y)\rtimes\Lambda$ which preserves the Cartan subalgebras $L^\infty(X)$ and $L^\infty(Y)$.  Orbit equivalence theory saw some development in the 1980s (see \cite{OW80,CFW81,Zi84}), and has been an area of active research over the last two decades (see \cite{Fu11,Ga10}). These advances in part have been stimulated by the success of the deformation/rigidity theory approach to the classification of $\mathrm{II}_1$ factors developed by Popa and others (see \cite{Po08,Vae10,Io12}).

The study of orbit equivalence can also be motivated from an entirely different point of view, being a measurable counterpart to quasi-isometry of groups. Gromov \cite{Gro91} introduced measure equivalence (ME) for countable discrete groups as a measurable analogue of quasi-isometry and since then, this notion has proven to be an important tool in geometric group theory with connections to ergodic theory and operator algebras. Two infinite countable discrete groups $\Gamma$ and $\Lambda$ are \textit{measure equivalent} if there is a standard infinite measure space $(\Omega,m)$ with commuting, measure-preserving actions $\Gamma\actson(\Omega,m)$ and $\Lambda\actson(\Omega,m)$, so that both the actions admit finite-measure fundamental domains $Y,X\in\Omega$, that is, $m(Y),m(X)<\infty$ and
$$\Omega=\bigsqcup_{\gamma\in\Gamma}\gamma Y=\bigsqcup_{\lambda\in\Lambda}\lambda X.$$
The space $(\Omega,m)$ is called an ME-\textit{coupling} between $\Gamma$ and $\Lambda$, and the \textit{index} of such a coupling is
$$[\Gamma:\Lambda]_\Omega:=\frac{m(X)}{m(Y)}.$$
Notably, measure equivalence was used by Furman in \cite{Fu99m,Fu99o} to prove strong rigidity results for lattices in higher rank simple Lie groups. ME relates back to OE because of the following fact, observed by Zimmer and Furman: for two discrete groups $\Gamma$ and $\Lambda$, admitting free OE actions is equivalent to having an ME-coupling so that the fundamental domains coincide (see, for example, \cite[\textbf{P$_{\mathrm{ME}}$5}]{Ga05}, \cite[Theorem 2.5]{Fu11}).

If $X \subset \Omega$ is a Borel fundamental domain for the action $\Gamma\actson(\Omega,m)$, then on the level of function spaces, the characteristic function $1_X$ gives a projection in $L^\infty(\Omega, m)$ such that the collection $\{ 1_{\gamma X} \}_{\gamma \in \Gamma}$ forms a partition of unity, i.e., $\sum_{\gamma \in \Gamma} 1_{\gamma X} = 1$. This notion generalizes quite nicely to the non-commutative setting, and using this, Peterson, Ruth, and the first named author, in \cite{IPR}, defined that a \textit{fundamental domain} for an action on a von Neumann algebra $\Gamma \aactson{\sigma} \mathcal M$ is a projection $p \in \mathcal M$ such that $\sum_{\gamma \in \Gamma}\sigma_\gamma(p) = 1$, where the convergence is in the strong operator topology. Using this perspective for a fundamental domain they generalized the notion of measure equivalence by considering actions on non-commutative spaces. 
\begin{defn}[\cite{IPR}]\label{defn:vne}
Two countable discrete groups $\Gamma$ and $\Lambda$ are \textit{von Neumann equivalent} (vNE), written $\Gamma \sim_{\rm vNE} \Lambda$, if there exists a von Neumann algebra $\mathcal M$ with a faithful normal semi-finite trace ${\rm Tr}$ and commuting, trace-preserving actions of $\Gamma$ and $\Lambda$ on $\mathcal M$ such that the $\Gamma$- and $\Lambda$-actions individually admit a finite-trace fundamental domain. The semi-finite von Neumann algebra $\cM$ is called a \textit{von Neumann coupling} between $\Gamma$ and $\Lambda$.
\end{defn}

Like ME, vNE is stable under taking the direct product of groups. But neither ME nor vNE is stable under taking free products. For instance, since any two finite groups are ME (and hence vNE), and amenability is preserved under both ME and vNE, one gets that $\mathbb Z/2\mathbb Z*\mathbb Z/2\mathbb Z$ (amenable) is neither ME nor vNE to $\mathbb Z/3\mathbb Z*\mathbb Z/2\mathbb Z$ (non-amenable). However, as suggested in \cite[Remark 2.28]{MS06}, and proved in \cite[$\mathbf{P}_{\mathrm{ME}}{\bf 6}$]{Ga05}, stability under taking free products hold if one requires the additional assumption that groups are ME with a common fundamental domain. In other words, OE is stable under taking free products. This raises a natural question: {\it Is vNE, with common fundamental domain, stable under taking free products?} We obtain an affirmative answer to this question and introduce the following definition.

\begin{defn}
  Two countable discrete groups $\Gamma$ and $\Lambda$ are said to be \textit{von Neumann orbit equivalent} (vNOE), denoted $\Gamma\sim_{\mathrm{vNOE}}\Lambda$, if there exists a von Neumann coupling between $\Gamma$ and $\Lambda$ with a common fundamental domain.  
\end{defn}

\begin{thm}\label{thm: stability of free products for groups}
    If $\Gamma_i,\Lambda_i, ~i=1,2$ are countable discrete groups such that $\Gamma_i\sim_{\mathrm{vNOE}}\Lambda_i, ~i=1,2$, then $\Gamma_1*\Gamma_2\sim_{\mathrm{vNOE}}\Lambda_1*\Lambda_2$. 
\end{thm}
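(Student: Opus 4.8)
The plan is to deduce this from the corresponding statement at the level of tracial von Neumann algebras, using the dictionary between groups and their group von Neumann algebras that the framework sets up. Concretely, I would rely on three ingredients: (i) the characterization that $\Gamma \sim_{\mathrm{vNOE}} \Lambda$ if and only if $L\Gamma \sim_{\mathrm{vNOE}} L\Lambda$ as tracial von Neumann algebras; (ii) the stability of vNOE for tracial von Neumann algebras under free products; and (iii) the classical identity $L(\Gamma_1 * \Gamma_2) \cong (L\Gamma_1,\tau_1) * (L\Gamma_2,\tau_2)$, the tracial (Voiculescu) free product with respect to the canonical traces, and likewise for the $\Lambda_i$.

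Granting these, the deduction is short. From $\Gamma_i \sim_{\mathrm{vNOE}} \Lambda_i$ and (i) we get $L\Gamma_i \sim_{\mathrm{vNOE}} L\Lambda_i$ for $i=1,2$. Applying the free-product stability (ii) to the pairs $(L\Gamma_1, L\Lambda_1)$ and $(L\Gamma_2, L\Lambda_2)$ yields
\[
(L\Gamma_1) * (L\Gamma_2) \;\sim_{\mathrm{vNOE}}\; (L\Lambda_1) * (L\Lambda_2),
\]
where the free products are taken with respect to the canonical traces. By (iii) the left-hand side is $L(\Gamma_1 * \Gamma_2)$ and the right-hand side is $L(\Lambda_1 * \Lambda_2)$, with matching traces, so $L(\Gamma_1 * \Gamma_2) \sim_{\mathrm{vNOE}} L(\Lambda_1 * \Lambda_2)$. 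A final application of (i), now in reverse, gives $\Gamma_1 * \Gamma_2 \sim_{\mathrm{vNOE}} \Lambda_1 * \Lambda_2$.

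The substance of the theorem therefore sits entirely in ingredient (ii), and this is where I expect the real work, and the main obstacle, to lie. To prove it I would, starting from semifinite von Neumann couplings $(\mathcal M_i, \mathrm{Tr}_i)$ for $L\Gamma_i$ and $L\Lambda_i$ with a common fundamental domain $p_i$, build a coupling for the free products as a suitable free product of the $\mathcal M_i$. Since each $p_i$ has finite trace, the corner $p_i \mathcal M_i p_i$ is a finite von Neumann algebra, and the common-fundamental-domain hypothesis says that the two commuting actions each tile $\mathcal M_i$ by translates of the same tile $p_i$; this is the von Neumann analogue of the orbit relation being generated on a common base by both groups. I would form the trace-preserving (amalgamated) free product of the $\mathcal M_i$ along these finite corners, then inflate back to a semifinite algebra via the amplification encoded by the fundamental-domain tiling, equipping it with the naturally induced commuting actions of $\Gamma_1 * \Gamma_2$ and $\Lambda_1 * \Lambda_2$.

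The delicate point is to arrange this free product so that both induced actions admit a \emph{common} fundamental domain. This is the operator-algebraic counterpart of the ``free position'' phenomenon in Gaboriau's free product of measured equivalence relations: one must glue the two couplings along the common corner so that the translates of the glued tile under $\Gamma_1 * \Gamma_2$ (respectively $\Lambda_1 * \Lambda_2$) remain mutually orthogonal and sum to the identity in the strong operator topology. Checking that the resulting free product is again semifinite with a faithful normal semifinite trace, that the word-length combinatorics of the free products turns a single projection into a genuine partition of unity for both actions simultaneously, and that this projection is fundamental on each side, is the crux; the remainder is bookkeeping with the tiling isomorphisms. I would expect the graph-product analogue to follow the same strategy with the free product replaced by the graph product of von Neumann algebras.
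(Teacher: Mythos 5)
Your top-level reduction is exactly the paper's: the authors likewise deduce the group statement from (i) the equivalence $\Gamma\sim_{\mathrm{vNOE}}\Lambda \Leftrightarrow L\Gamma\sim_{\mathrm{vNOE}}L\Lambda$ (their Theorem \ref{thm: gp vNOE iff gp vNalg vNOE}), (ii) free-product stability of vNOE for tracial von Neumann algebras (their Theorem \ref{thm:free product vNalg}), and (iii) $L(\Gamma_1*\Gamma_2)\cong L\Gamma_1 * L\Gamma_2$. That part of your argument is correct and is precisely how the paper proceeds.

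The gap is that you have not actually proved ingredient (ii), which you yourself identify as carrying the entire substance of the theorem, and the route you sketch for it is not the paper's and is unlikely to go through as described. First, a mismatch of definitions: vNOE for tracial von Neumann algebras is defined in the paper via a pointed $A\ovt Q$--$B$-bimodule with a cyclic bi-tracial vector (equivalently, a normal $*$-homomorphism $\phi:B\to A\ovt Q$ with $\mathbb E_Q\circ\phi=\tau_B$ and $\overline{\Span\{x\phi(b)\}}^{\|\cdot\|_2}=L^2(A\ovt Q)$), \emph{not} via a semifinite coupling with a common fundamental domain; for general tracial von Neumann algebras the latter notion is not even defined in the paper (Proposition \ref{prop:vNOE of vNalg implies vNE of vNalg} only gives one implication towards vNE), so your proposed inputs $(\mathcal M_i,\mathrm{Tr}_i)$ with common tiles $p_i$ are not available. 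Second, the strategy of gluing semifinite couplings by an amalgamated free product along the finite corners and checking orthogonality of word-length translates of a glued tile is exactly the direct adaptation of Gaboriau's argument that the authors say is obstructed by the lack of a ``point perspective''; you flag the orthogonality/partition-of-unity verification as ``the crux'' but give no argument for it. The paper avoids all of this: it proves (ii) by taking the coupling homomorphism $\phi:B\to A\ovt Q$, viewing $(A*C)\ovt Q$ as $(A\ovt Q)*_Q(C\ovt Q)$, extending $\phi$ to $\widetilde\phi:B*C\to(A*C)\ovt Q$ on alternating centered words, and verifying the density condition with the help of Lemma \ref{lem:special subalgebra of A via coupling} (that $\{a\in A: a\otimes 1\in V\}$ is an SOT-closed subalgebra). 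Without either that argument or a completed version of your coupling construction, the proof is incomplete at its essential step.
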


Green \cite{Green90}, in her Ph.D. thesis, introduced graph products of groups, another important group-theoretical construction. If $\mathcal{G}=(V,E)$ is a simple, non-oriented graph with vertex set $V$ and edge set $E$, then the graph product of a family, $\{\Gamma_v\}_{v\in V}$, of groups indexed by $V$ is obtained from the free product $*_{v\in V}\Gamma_v$ by adding commutator relations determined by the edge set $E$. Depending on the graph, free products and direct products are special cases of the graph product construction. Adapting the ideas of \cite{Ga05}, Horbez and Huang \cite[Proposition 4.2]{HH22} proved the stability of OE under taking graph products over finite simple graphs (see also \cite{DE22}). To further explore the study of graph products within the context of measured group theory, we would like to draw the reader's attention to the article \cite{EH24}. In the current article, we also prove the stability of vNOE under taking graph products.

\begin{thm}\label{thm: stability of graph products for groups}
Let $\mathcal{G}=(V,E)$ be a simple graph with at most countably infinite vertices. Let $\Gamma$ and $\Lambda$ be two graph products over $\mathcal{G}$, with countable vertex groups $\{\Gamma_v\}_{v\in V}$ and $\{\Lambda_v\}_{v\in V}$, respectively. If $\Gamma_v\sim_{\mathrm{vNOE}}\Lambda_v$ for every $v\in V$, then $\Gamma\sim_{\mathrm{vNOE}}\Lambda$.
\end{thm}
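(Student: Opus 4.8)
The plan is to push the statement down to the level of tracial von Neumann algebras, where graph products are algebraically transparent, prove stability there, and then lift back using the group--algebra dictionary. Concretely, I would rely on two facts developed earlier: (a) the characterization that $\Gamma\sim_{\mathrm{vNOE}}\Lambda$ if and only if $L\Gamma\sim_{\mathrm{vNOE}}L\Lambda$ as tracial von Neumann algebras, and (b) the stability of vNOE of tracial von Neumann algebras under graph products, namely that $N_v\sim_{\mathrm{vNOE}}P_v$ for all $v\in V$ implies $\ast_{\mathcal G}(N_v,\tau_v)\sim_{\mathrm{vNOE}}\ast_{\mathcal G}(P_v,\sigma_v)$ for the canonical traces.

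First I would invoke the standard identification of the group graph product with the graph product of the vertex group von Neumann algebras: writing $\Gamma=\ast_{\mathcal G}\Gamma_v$ and $\Lambda=\ast_{\mathcal G}\Lambda_v$, one has trace-preserving isomorphisms $L\Gamma\cong\ast_{\mathcal G}(L\Gamma_v,\tau_v)$ and $L\Lambda\cong\ast_{\mathcal G}(L\Lambda_v,\sigma_v)$, where $\tau_v,\sigma_v$ are the canonical traces on $L\Gamma_v,L\Lambda_v$. Granting (a), (b) and this identification, the theorem follows by a short chain of implications: the hypothesis $\Gamma_v\sim_{\mathrm{vNOE}}\Lambda_v$ gives $L\Gamma_v\sim_{\mathrm{vNOE}}L\Lambda_v$ for every $v$ by (a); stability (b) then yields $\ast_{\mathcal G}L\Gamma_v\sim_{\mathrm{vNOE}}\ast_{\mathcal G}L\Lambda_v$, i.e. $L\Gamma\sim_{\mathrm{vNOE}}L\Lambda$; and a final application of (a) in the reverse direction produces $\Gamma\sim_{\mathrm{vNOE}}\Lambda$.

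The hard part is not this reduction but the algebra-level stability (b), where I expect essentially all of the difficulty to reside. Adapting the orbit-equivalence arguments of Gaboriau \cite{Ga05} and Horbez--Huang \cite{HH22} to the von Neumann framework of \cite{IPR}, the strategy is to build the coupling for the graph products out of the vertex couplings $\mathcal M_v$ between $N_v$ and $P_v$, and here the \emph{common} fundamental domain is the crucial ingredient: the shared domain projection $p_v$ yields a single finite corner $p_v\mathcal M_v p_v$ over which both the $N_v$- and the $P_v$-structure are realized simultaneously, and this compatibility is exactly what permits the vertex data to be amalgamated along the commutation relations dictated by the edge set $E$ into a single semifinite coupling equipped with a common fundamental domain. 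I anticipate two technical points requiring care. First, one must verify that the assembled coupling is trace preserving and that the candidate domain projection sums to $1$ in the strong operator topology, so that it is genuinely a fundamental domain for each of the two graph-product actions. Second, since $V$ is allowed to be countably infinite, I would exhaust $\mathcal G$ by its finite induced subgraphs and control the limit of the resulting finite-graph couplings, checking that the common-fundamental-domain property survives the passage to the limit. With (b) in hand, Theorem~\ref{thm: stability of graph products for groups} is then immediate from the displayed chain of implications.
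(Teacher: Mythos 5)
Your reduction is exactly the paper's proof of this statement: combine the identification $L(\bigstar_{v\in V}\Gamma_v)\cong\bigstar_{v\in V}L\Gamma_v$ with Theorem~\ref{thm: gp vNOE iff gp vNalg vNOE} (groups are vNOE if and only if their group von Neumann algebras are) and the algebra-level stability Theorem~\ref{thm:graph product vNalg}; nothing more is needed for the group statement. Where you diverge is in your sketch of how the algebra-level stability itself is to be proved, and there the divergence matters. The paper does not glue the semifinite couplings $\mathcal M_v$ together along the edge relations, and it needs no exhaustion of $\mathcal G$ by finite induced subgraphs. It works entirely with the homomorphism characterization of vNOE from Theorem~\ref{thm: rephrase vNOE via homoms}: each vertex coupling is encoded as a normal $*$-homomorphism $\phi_v\colon A_v\to B_v\ovt Q_v$ with $\bE_{Q_v}\circ\phi_v=\tau_{A_v}$ and a density condition; one sets $Q=\ovt_{v\in V}Q_v$, observes that $\phi_v(A_v)$ and $\phi_w(A_w)$ commute in $B\ovt Q$ whenever $(v,w)\in E$, defines $\phi$ on the algebraic graph product, verifies $\bE_Q\circ\phi=\tau_A$ on reduced words so that $\phi$ extends to $A$ by the universal property of graph products, and establishes the density condition by induction on the length of reduced words --- an argument that handles countably infinite $V$ with no limiting procedure. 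By contrast, your proposed strategy of adapting Gaboriau and Horbez--Huang by amalgamating the vertex couplings directly is precisely the route the introduction warns against, owing to the lack of a point perspective (no cocycles, no measured equivalence relation) in the von Neumann setting, and your sketch supplies no actual mechanism for the amalgamation or for the control of the limit over finite subgraphs. So while the proof of the stated theorem is correct modulo ingredient (b), that ingredient is asserted rather than proved, and the path you indicate toward it is not one that is known to work.
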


In attempting to prove the above theorems, if one tries to adapt the techniques from one of \cite{Ga05, HH22, DE22}, an immediate problem is presented by the lack of ``point perspective" in the theory of von Neumann (orbit) equivalence. The lack of any natural non-commutative analogue of the notion of OE/ME cocycles, or that of measured equivalence relation can be considered as a few problems presented by the lack of point perspective. This often leads one to consider genuinely new techniques and different alternatives (see e.g., \cite{IPR, Ish, Bat, BatMd}). To overcome this obstruction, we introduce the notion of von Neumann orbit equivalence for tracial von Neumann algebras that is ``compatible" with vNOE of groups (see Definition \ref{defn: vNOE vNalgs} and Theorem \ref{thm: gp vNOE iff gp vNalg vNOE}), and prove the analogues of Theorems \ref{thm: stability of free products for groups} and \ref{thm: stability of graph products for groups} at the level of tracial von Neumann algebras.

The notion of von Neumann equivalence admits a generalization in the setting of finite von Neumann algebras \cite[Section 8]{IPR}, and relates to vNE for groups as follows: $\Gamma\sim_{\mathrm{vNE}}\Lambda$ if and only if $L\Gamma\sim_{\mathrm{vNE}}L\Lambda$ \cite[Theorem 1.5]{IPR}. In parallel to this, one might attempt to define two tracial von Neumann algebras to be vNOE if they are vNE and admit a ``common" fundamental domain, and identify a correct meaning of ``common". However, we take a slightly different approach, and motivated by the recently defined notion of measure equivalence of finite von Neumann algebras by Berendschot and Vaes in \cite{BV22}, we introduce the following definition.

\begin{defn}\label{defn: vNOE vNalgs}
    Let $(A,\tau_A)$ and $(B,\tau_B)$ be tracial von Neumann algebras. We say that $(A,\tau_A)$ and $(B,\tau_B)$ are \textit{von Neumann orbit equivalent}, denoted $(A,\tau_A)\sim_{\mathrm{vNOE}}(B,\tau_B)$, if there exists a tracial von Neumann algebra $(Q,\tau_Q)$, a Hilbert $A\ovt Q-B$-bimodule $\cH$, and a vector $\xi\in\cH$ such that 
    \begin{enumerate}
        \item $\langle(a\otimes x)\xi,\xi\rangle=\tau_A(a)\tau_Q(x)$, and $\langle y\xi b,\xi\rangle=\tau_Q(y)\tau_B(b)$ for every $a\in A, ~x,y\in Q,$ and $b\in B$.

        \item $\overline{\Span((A\ovt Q)\xi)}=\cH=\overline{\Span(Q\xi B)}$.
    \end{enumerate}
\end{defn}

We prove in Proposition \ref{vNOE is an equivalence relation} that vNOE is indeed an equivalence relation. We should remark that, in the above definition, $\cH$ can also be considered as an $A-B\ovt Q^{\mathrm{op}}$-bimodule satisfying conditions analogous to the two mentioned in the definition. This essentially is the reason for the symmetry of vNOE, even though the definition seems asymmetric at first. To prove transitivity, inspired by \cite[Lemma 5.11]{BV22}, we establish an equivalent characterization of vNOE in Theorem \ref{thm: rephrase vNOE via homoms}, and show in Theorem \ref{thm: gp vNOE iff gp vNalg vNOE} that $\Gamma\sim_{\mathrm{vNOE}}\Lambda$ if and only if $L\Gamma\sim_{\mathrm{vNOE}}L\Lambda$. Since $L(\Gamma*\Lambda)\cong L\Gamma*L\Lambda$, Theorem \ref{thm: stability of free products for groups} follows from the following theorem, which we prove in Section \ref{sec:vNOE}. We should remark that the above definition, as stated, depends on the choice of the traces $\tau_A$ and $\tau_B$. Outside of the case of finite factors, it is not immediately clear whether the above definition is independent of the choice of the traces for general finite von Neumann algebras.

\begin{thm}\label{thm:free product vNalg}
    If $(A_i,\tau_{A_i}), (B_i,\tau_{B_i}), ~i=1,2$ are tracial von Neumann algebras such that $(A_i,\tau_{A_i})\sim_{\mathrm{vNOE}}(B_i,\tau_{B_i}),~ i=1,2$, then, $(A_1*A_2,\tau_{A_1}*\tau_{A_2})\sim_{\mathrm{vNOE}}(B_1*B_2,\tau_{B_1}*\tau_{B_2})$.
\end{thm}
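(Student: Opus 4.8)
The plan is to reduce everything to one freeness computation. First I would unpack Definition \ref{defn: vNOE vNalgs} for each index $i$ (equivalently, read off the reformulation of Theorem \ref{thm: rephrase vNOE via homoms}): identifying $\mathcal H_i \cong L^2(A_i \ovt Q_i)$ and $\xi_i \cong \widehat 1$, the left $A_i \ovt Q_i$-action is left multiplication, while the right $B_i$-action is right multiplication through a normal, unital, trace-preserving $*$-homomorphism $\beta_i \colon B_i \to A_i \ovt Q_i$. Under this identification, condition (1) of the definition becomes the \emph{marginal identity} $(\tau_{A_i}\otimes \mathrm{id}_{Q_i})(\beta_i(b)) = \tau_{B_i}(b)\,1$ for all $b \in B_i$ (equivalently $E_{1\otimes Q_i}\circ \beta_i = \tau_{B_i}(\cdot)\,1$), and condition (2) becomes the \emph{generation identity} $(1\otimes Q_i)\vee \beta_i(B_i) = A_i \ovt Q_i$. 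These two properties are my working hypotheses.

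Next I would assemble the free-product coupling. Set $Q := Q_1 * Q_2$, $A := A_1 * A_2$, $B := B_1 * B_2$ (with the free-product traces), take $\mathcal H := L^2(A \ovt Q)$ and $\xi := \widehat 1$, and let $A \ovt Q$ act on the left by left multiplication. Using the canonical trace-preserving inclusions $A_i \hookrightarrow A$ and $Q_i \hookrightarrow Q$, I view each $\beta_i$ as a homomorphism $B_i \to A_i \ovt Q_i \subseteq A \ovt Q$ and define $\beta := \beta_1 * \beta_2$ on $B$, letting $B$ act on the right of $\mathcal H$ by right multiplication via $\beta$. The whole theorem then reduces to checking that $\beta$ is a well-defined trace-preserving embedding and that $(\mathcal H,\xi)$ satisfies conditions (1) and (2) of Definition \ref{defn: vNOE vNalgs} for $A$, $B$, $Q$.

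The crux, which I expect to be the main obstacle, is the following freeness lemma: \emph{if $N_i \subseteq A_i \ovt Q_i$ are von Neumann subalgebras with $(\tau_{A_i}\otimes\mathrm{id}_{Q_i})(N_i) = \mathbb{C}1$, then $N_1$ and $N_2$ are free in $(A \ovt Q,\tau_A\otimes\tau_Q)$.} I would stress that the naive hope that $A_1 \ovt Q_1$ and $A_2 \ovt Q_2$ are \emph{themselves} free is false: for $a \in A_1$, $r \in Q_2$ centered, the alternating centered word $(\mathring a\otimes 1)(1\otimes \mathring r)(\mathring a\otimes 1)(1\otimes \mathring r) = \mathring a^2 \otimes \mathring r^2$ has trace $\tau_{A_1}(\mathring a^2)\,\tau_{Q_2}(\mathring r^2)\neq 0$ in general, so the marginal hypothesis is genuinely needed (note $1\otimes\mathring r$ violates it). Granting the hypothesis, the lemma is short: for a centered $x_k \in N_{i_k}$, writing $x_k = \sum_s a_{k,s}\otimes q_{k,s}$ and replacing each $a_{k,s}$ by its centered part $\mathring a_{k,s}$ only changes $x_k$ by $1\otimes (\tau_{A_{i_k}}\otimes\mathrm{id})(x_k) = 0$, so $x_k = \sum_s \mathring a_{k,s}\otimes q_{k,s}$ with every $\mathring a_{k,s}$ centered in $A_{i_k}$. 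Hence an alternating centered word $x_1\cdots x_n$ expands as $\sum_{\vec s}\tau_A(\mathring a_{1,s_1}\cdots \mathring a_{n,s_n})\,\tau_Q(q_{1,s_1}\cdots q_{n,s_n})$, and every $A$-coefficient vanishes because $\mathring a_{1,s_1}\cdots \mathring a_{n,s_n}$ is an alternating centered word in the free product $A_1 * A_2$. Applying this with $N_i = \beta_i(B_i)$, whose marginal hypothesis is exactly the marginal identity above, shows $\beta_1(B_1)$ and $\beta_2(B_2)$ are free, so $\beta = \beta_1 * \beta_2$ is a well-defined trace-preserving embedding of $B_1 * B_2$.

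Finally I would verify the two conditions. Condition (2) is the easy half: since $(1\otimes Q_i)\vee\beta_i(B_i) = A_i \ovt Q_i \ni A_i \otimes 1$, the algebra $(1\otimes Q)\vee\beta(B)$ contains $A_1\otimes 1$, $A_2\otimes 1$ and $1\otimes Q$, hence all of $A \ovt Q$, giving $\overline{\Span(Q\xi B)}=\mathcal H$; and $\overline{\Span((A\ovt Q)\xi)}=\mathcal H$ is automatic since $\widehat 1$ is cyclic. For condition (1), the first identity holds because $\xi=\widehat 1$ is the trace vector, and the second, $\tau_A\otimes\tau_Q\big((1\otimes y)\beta(b)\big)=\tau_Q(y)\tau_B(b)$, follows from the same leg-by-leg mechanism as in the lemma: for a nontrivial reduced word $b\in B$ one gets $(\tau_A\otimes\mathrm{id}_Q)(\beta(b))=0$ because the $A$-legs again form an alternating centered word in $A_1*A_2$. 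This establishes $(A_1*A_2,\tau_{A_1}*\tau_{A_2})\sim_{\mathrm{vNOE}}(B_1*B_2,\tau_{B_1}*\tau_{B_2})$. The delicate points to handle carefully in the full write-up are the identification of the right $B_i$-action with a concrete homomorphism $\beta_i$ into $A_i \ovt Q_i$ (the adjoint/opposite-algebra bookkeeping) and the normality and $\sigma$-weak-density arguments needed to promote the algebraic identities to the generated von Neumann algebras.
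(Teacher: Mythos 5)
Your route is genuinely different from the paper's: the paper reduces to replacing one free factor at a time via transitivity (showing $A*C\sim_{\mathrm{vNOE}}B*C$) and realizes $(A*C)\ovt Q$ as the amalgamated free product $(A\ovt Q)*_Q(C\ovt Q)$, whereas you treat both factors at once with $Q=Q_1*Q_2$ and prove a freeness lemma inside $(A_1*A_2)\ovt(Q_1*Q_2)$. That lemma is correct, and the vanishing-marginal hypothesis $(\tau_{A_i}\otimes\id_{Q_i})(x)=0$ is exactly the right one: it forces the $A$-legs of an alternating word to form an alternating centered word in $A_1*A_2$. This plays the same role as the paper's observation that its map $\widetilde\phi_0$ sends alternating centered words to alternating centered words with respect to $\bE_Q$. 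Your verification of condition (1), and your counterexample showing $A_1\ovt Q_1$ and $A_2\ovt Q_2$ are not themselves free, are both fine.

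There is, however, a genuine gap in your treatment of condition (2), which you call ``the easy half.'' What must be shown is that the \emph{closed linear span} of the products $\{y\beta(b):y\in Q,\ b\in B\}$ is all of $L^2(A\ovt Q)$; this is strictly stronger than saying that $1\otimes Q$ and $\beta(B)$ generate $A\ovt Q$ as a von Neumann algebra, and indeed the ``generation identity'' $(1\otimes Q_i)\vee\beta_i(B_i)=A_i\ovt Q_i$ you record as your hypothesis is itself only a weakening of the actual input, which is the span statement in $L^2(A_i\ovt Q_i)$. The subspace $V=\overline{\Span\{\beta(b)y\}}$ (equivalently, by taking adjoints, $\overline{\Span\{y\beta(b)\}}$) is a priori invariant only under left multiplication by $\beta(B)$ and right multiplication by $Q$; it is not an algebra, so knowing it contains $A_1\otimes 1$, $A_2\otimes 1$ and $1\otimes Q$ does not permit you to multiply these together and conclude $V=L^2(A\ovt Q)$. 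The paper bridges precisely this gap with Lemma \ref{lem:special subalgebra of A via coupling}: when $\bE_Q\circ\beta=\tau_B$, the set $N=\{a\in A:a\otimes 1\in V\}$ is an SOT-closed subalgebra of $A$ and $V$ is invariant under right multiplication by $N\otimes 1$ (the proof rewrites $\beta(b)y(a\otimes 1)=\beta(b)(a\otimes 1)y$ using that $A\otimes 1$ and $1\otimes Q$ commute); since $N\supseteq A_1\cup A_2$ and $N$ is SOT-closed, $N=A_1*A_2$, whence $V\supseteq(A\otimes 1)Q$ is dense. That lemma repairs your argument essentially verbatim, but as written this step is missing and the inference you offer in its place is not valid.
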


Similar to free products, one also has that the group von Neumann algebra of a graph product of groups is isomorphic to the (von Neumann algebraic) graph product of the group von Neumann algebras, and hence Theorem \ref{thm: stability of graph products for groups} follows from the following theorem.

\begin{thm}\label{thm:graph product vNalg}
Let $\mathcal{G}=(V,E)$ be a simple graph with at most countably infinite vertices. Let $(A,\tau_A)$ and $(B,\tau_B)$ be two graph products over $\mathcal{G}$, with tracial vertex von Neumann algebras $\{(A_v,\tau_{A_v})\}_{v\in V}$ and $\{(B_v,\tau_{B_v})\}_{v\in V}$, respectively. If $(A_v,\tau_{A_v})\sim_{\mathrm{vNOE}}(B_v,\tau_{B_v})$ for every $v\in V$, then $(A,\tau_A)\sim_{\mathrm{vNOE}}(B,\tau_B)$.
\end{thm}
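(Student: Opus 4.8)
The plan is to prove the theorem through the homomorphism reformulation of Theorem \ref{thm: rephrase vNOE via homoms}, extending the free product case (Theorem \ref{thm:free product vNalg}, which is the instance where $\mathcal{G}$ has no edges) by carrying the graph-product bookkeeping throughout. Unwinding Definition \ref{defn: vNOE vNalgs}, the vertex hypothesis $(A_v,\tau_{A_v})\sim_{\mathrm{vNOE}}(B_v,\tau_{B_v})$ supplies, for each $v\in V$, a tracial von Neumann algebra $(Q_v,\tau_{Q_v})$ and a trace-preserving normal unital embedding $\beta_v\colon B_v\to A_v\ovt Q_v$ such that $(1\otimes Q_v)\vee\beta_v(B_v)=A_v\ovt Q_v$ (generation) and $\tau((1\otimes y)\beta_v(b))=\tau_{Q_v}(y)\tau_{B_v}(b)$ for all $y\in Q_v$, $b\in B_v$ (the weak independence coming from condition (1)), with $\cH_v=L^2(A_v\ovt Q_v)$ and cyclic vector $\widehat{1}$. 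I would then set $A:=*_{\mathcal{G}}A_v$ and $B:=*_{\mathcal{G}}B_v$, and, crucially, take the amplifying algebra to be the graph product over the \emph{same} graph, $Q:=*_{\mathcal{G}}Q_v$, aiming to produce the bimodule $\cH:=L^2(A\ovt Q)$ with $\xi:=\widehat{1}$ and a right $B$-action implemented by an embedding $\beta\colon B\to A\ovt Q$.

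The map $\beta$ is assembled from the $\beta_v$ via the universal property of the graph product. For this to be legitimate I must check that the images respect the defining commutation relations: if $\{v,w\}\in E$, then $\beta_v(B_v)\subset A_v\ovt Q_v$ and $\beta_w(B_w)\subset A_w\ovt Q_w$ commute inside $A\ovt Q$, since $A_v$ commutes with $A_w$ and $Q_v$ commutes with $Q_w$ (the edge is present in both graph products, as $\mathcal{G}$ is common), while $A_v$ commutes with $Q_w$ and $Q_v$ with $A_w$ automatically (distinct tensor legs). Granting that $\beta=*_{\mathcal{G}}\beta_v$ is a well-defined trace-preserving homomorphism, the two conditions of Definition \ref{defn: vNOE vNalgs} become vertex-local statements. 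The left-hand conditions are automatic because $\xi=\widehat{1}$ is cyclic and separating for $A\ovt Q$. The generation condition $\overline{\Span(Q\xi B)}=\cH$ amounts to $\beta(B)\vee(1\otimes Q)=A\ovt Q$, which follows from
\[
\textstyle\bigvee_v\big(\beta_v(B_v)\vee Q_v\big)=\bigvee_v(A_v\ovt Q_v)\supseteq (A\otimes 1)\vee(1\otimes Q)=A\ovt Q,
\]
using the vertex generation at each $v$. The remaining trace identity in condition (1) is the global version of the weak independence, which I would propagate from the vertex relations by the same moment analysis as below.

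The heart of the argument, and the step I expect to be the main obstacle, is showing that $\beta$ is trace-preserving, equivalently that the family $\{\beta_v(B_v)\}_{v\in V}$ is \emph{graph-independent} inside $(A\ovt Q,\ \tau_A\otimes\tau_Q)$; for then a reduced word $b_1\cdots b_n$ in $B$ (with each $b_j\in B_{v_j}$ centered and the syllable sequence reduced) maps to a product of the centered elements $\beta_{v_j}(b_j)\in\beta_{v_j}(B_{v_j})$, which has trace zero. The subtlety is exactly the cross-commutation intrinsic to the genuine tensor product $A\ovt Q$: a vertex algebra $A_v$ always commutes with $Q_w$ for $w\neq v$, and naively this destroys independence --- indeed the \emph{full} family $\{A_v\ovt Q_v\}$ is not graph-independent (already for free products one checks that a centered product of ``pure $A_v$'' and ``pure $Q_w$'' syllables can collapse to a nonzero scalar). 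What rescues the computation is precisely the weak independence condition: expanding each $\beta_{v_j}(b_j)$ along the two tensor legs and applying the graph-product moment formula to the $A$- and $Q$-words separately, the identity $\tau((1\otimes y)\beta_v(b))=\tau_{Q_v}(y)\tau_{B_v}(b)$ forbids $\beta_v(B_v)$ from containing any nonzero ``pure $Q_v$'' direction, which is what would otherwise permit a spurious collapse across tensor legs. Making this rigorous requires organizing the moment expansion so that every surviving term vanishes either by the reduced-word property in $A$, in $Q$, or by the weak independence; I anticipate that the symmetric $A$--$B\ovt Q^{\mathrm{op}}$ description of the vertex bimodules noted after Definition \ref{defn: vNOE vNalgs}, which symmetrically forbids ``pure $A_v$'' directions, will also be needed to close the computation.

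Finally, since every moment computation involves only finitely many vertices, the case of a countably infinite vertex set $V$ follows by realizing the graph-product von Neumann algebra as the strong-operator closure of the union of its finite-subgraph graph products, equipped with the canonical trace; all the verifications above are stable under this limit. Having produced the bimodule $\cH=L^2(A\ovt Q)$ with vector $\xi=\widehat{1}$ satisfying both conditions of Definition \ref{defn: vNOE vNalgs}, I conclude $(A,\tau_A)\sim_{\mathrm{vNOE}}(B,\tau_B)$, which is the assertion of the theorem.
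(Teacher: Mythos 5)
Your high-level architecture is the same as the paper's: pass to the homomorphism picture of Theorem \ref{thm: rephrase vNOE via homoms}, assemble one $*$-homomorphism out of the vertex couplings via the universal property of the graph product, and then verify the conditional-expectation identity and the density condition. However, there are two genuine gaps, and the first is fatal as written. You reduce the condition $\overline{\Span(Q\xi B)}=\cH$ to the statement $\beta(B)\vee(1\otimes Q)=A\ovt Q$. These are not equivalent: the definition requires the \emph{closed linear span} of the products $y\beta(b)$, with a single factor from $Q$ and a single factor from $B$, to exhaust $L^2(A\ovt Q)$, whereas the von Neumann algebra generated by $\beta(B)$ and $1\otimes Q$ also contains all alternating words $y_1\beta(b_1)y_2\beta(b_2)\cdots$ (compare $L\langle a\rangle$ and $L\langle b\rangle$ inside $L\mathbb{F}_2$: they generate, but the span of products $xy$ is far from dense). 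This span condition is exactly what separates vNOE from the weaker notion of vNE, and verifying it is the real content of the proof. The paper does this by induction over reduced words $v_1\cdots v_n$, proving $\overline{\Span(\phi_{v_1}(A_{v_1})\cdots\phi_{v_n}(A_{v_n})(1\otimes Q))}=\overline{\Span(B_{v_1}\cdots B_{v_n}(1\otimes Q))}$, where the induction step uses the vertex density statement at $v_1$ together with the fact that $B\otimes 1$ commutes with $1\otimes Q$ to shuttle the $Q$-factor past the tail of the word; your generation argument establishes only a necessary condition and leaves this step entirely unaddressed.

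Second, you correctly identify that one must show $\bE_Q\circ\beta=\tau_B$ on reduced centered words, but you leave it as an unfinished ``moment expansion'' and predict needing a symmetric condition forbidding pure $A_v$ directions in $\beta_v(B_v)$. No such condition is available from the definition (only $\bE_{Q_v}\circ\beta_v=\tau_{B_v}$ is), and none is needed: since $\bE_{Q_v}(\beta_v(b))=0$ for centered $b$, each $\beta_v(b)$ is a bounded strong limit of sums $\sum a\otimes q$ with $a\in A_v$ centered, so applying $\bE_Q=\tau_A\otimes\mathrm{id}$ to the resulting product and invoking graph-independence of $\{A_v\}_{v}$ inside $A$ kills every term. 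This one-line argument also shows that your worry about the failure of graph-independence of the full family $\{A_v\ovt Q_v\}_v$ is irrelevant. Finally, the paper takes $Q=\ovt_{v\in V}Q_v$ (a plain tensor product) rather than your graph product of the $Q_v$; both choices have the two properties actually used (trace-preserving inclusions $Q_v\subset Q$, and commutation of $Q_v$ with $Q_w$ for adjacent $v,w$), but the tensor product keeps the bookkeeping minimal and nothing is gained from the graph-product choice.
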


\begin{rem}
Since graph product over a totally disconnected graph, i.e., a graph with no edges, gives free product, Theorem \ref{thm:free product vNalg} follows from Theorem \ref{thm:graph product vNalg}. In particular, Theorem \ref{thm:graph product vNalg} shows that Theorem \ref{thm:free product vNalg} holds for free products of countably many tracial von Neumann algebras similar to the result obtained by Gaboriau in \cite[$\mathbf{P}_{\mathrm{ME}}{\bf 6^*}$]{Ga05}. Furthermore, Horbez and Huang \cite[Proposition 4.2]{HH22} proved the stability of OE under taking graph products over finite simple graphs, but our result holds for an arbitrary, countably infinite graph product. We include a proof of Theorem \ref{thm:free product vNalg} for two reasons. Firstly, the notation is a little less involved compared to the proof of Theorem \ref{thm:graph product vNalg}. Secondly, for the convenience of a reader who might be interested in the result but is not familiar with graph products.
\end{rem}

In Proposition \ref{prop:vNOE of vNalg implies vNE of vNalg}, we show that vNOE tracial von Neumann algebras are vNE in the sense of \cite{IPR}. We should remark that vNE does not imply vNOE in general. 

In the final section, we obtain a partial analogue of Singer's theorem \cite{Si55} for OE in the setting of vNOE of groups. As noted in \cite[Example 5.2]{IPR},  if $\Gamma$ and $\Lambda$ are countable discrete groups with trace-preserving actions $\Gamma\actson (A,\tau_A)$ and $\Lambda\actson(B,\tau_B)$ on tracial von Neumann algebras $(A,\tau_A)$ and $(B,\tau_B)$, respectively, and if $\theta:B\rtimes\Lambda\to A\rtimes\Gamma$ is a trace-preserving isomorphism such that $\theta(B)=A$, then $\Gamma\sim_{\mathrm{vNOE}}\Lambda$. As a partial converse to this, we prove the following theorem.

\begin{thm}
If $\Gamma$ and $\Lambda$ are countable discrete groups such that $\Gamma\sim_{\mathrm{vNOE}}\Lambda$, then there exist tracial von Neumann algebras $(A,\tau_A),~ (B,\tau_B)$, trace-preserving actions $\Gamma\actson A,~\Lambda\actson B$, and a trace-preserving isomorphism $\theta:B\rtimes\Lambda\to A\rtimes\Gamma$.
\end{thm}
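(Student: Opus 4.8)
The plan is to reconstruct both crossed products inside a single finite corner of the coupling, and then to exhibit that corner as a crossed product in two different ways. Unpacking $\Gamma\sim_{\mathrm{vNOE}}\Lambda$, fix a von Neumann coupling $(\mathcal{M},\Tr)$ with commuting trace-preserving actions $\sigma\colon\Gamma\actson\mathcal{M}$ and $\rho\colon\Lambda\actson\mathcal{M}$, together with a common fundamental domain: a projection $p\in\mathcal{M}$ with $\Tr(p)<\infty$ and $\sum_{\gamma}\sigma_\gamma(p)=1=\sum_{\lambda}\rho_\lambda(p)$ in the strong operator topology. Since any two of the summands satisfy $q_1+q_2\le 1$, which for projections forces $q_1q_2=0$, the families $\{\sigma_\gamma(p)\}_\gamma$ and $\{\rho_\lambda(p)\}_\lambda$ are each orthogonal partitions of unity. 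As the two actions commute, they assemble into an action of $\Gamma\times\Lambda$; I would form $\mathcal{P}:=\mathcal{M}\rtimes(\Gamma\times\Lambda)$ with its canonical faithful normal semifinite trace $\widetilde{\Tr}$, and set $R:=p\mathcal{P}p$. Because $p$ is the unit of $R$ and $\widetilde{\Tr}(p)=\Tr(p)<\infty$, the algebra $R$ is \emph{finite}; normalize its trace to $\tau:=\widetilde{\Tr}(\,\cdot\,)/\Tr(p)$.

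Next I would invoke the standard ``fundamental-domain amplification.'' Writing $u_\gamma,w_\lambda\in\mathcal{P}$ for the implementing unitaries, the elements $\tilde w_\lambda:=w_\lambda p$ are partial isometries with source $p$ and mutually orthogonal ranges $\rho_\lambda(p)$ summing to $1$; hence $p$ is equivalent to $1$ with multiplicity $\Lambda$ inside $\mathcal{M}\rtimes\Lambda$, giving $\mathcal{M}\rtimes\Lambda\cong B(\ell^2\Lambda)\ovt A$ with $A:=p(\mathcal{M}\rtimes\Lambda)p$ and $p\leftrightarrow e_{ee}\otimes 1$. The commuting $\Gamma$-action $\bar\sigma$ on $\mathcal{M}\rtimes\Lambda$ (acting by $\sigma$ on $\mathcal{M}$ and fixing the $w_\lambda$) should then be shown to descend, after absorbing the $B(\ell^2\Lambda)$-factor, to a genuine trace-preserving action $\beta\colon\Gamma\actson A$ for which
\[
\mathcal{P}=(\mathcal{M}\rtimes\Lambda)\rtimes\Gamma\;\cong\;B(\ell^2\Lambda)\ovt\bigl(A\rtimes_\beta\Gamma\bigr).
\]
Compressing by $p=e_{ee}\otimes 1$ then yields $R\cong A\rtimes_\beta\Gamma$. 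Running the symmetric argument with the roles of $\Gamma$ and $\Lambda$ interchanged — using that $p$ is also a $\Gamma$-fundamental domain — produces $B:=p(\mathcal{M}\rtimes\Gamma)p$, a trace-preserving action $\alpha\colon\Lambda\actson B$, and an isomorphism $R\cong B\rtimes_\alpha\Lambda$. Composing, $\theta\colon B\rtimes_\alpha\Lambda\xrightarrow{\ \cong\ }R\xrightarrow{\ \cong\ }A\rtimes_\beta\Gamma$ is the desired isomorphism; it is trace-preserving because both identifications carry the canonical crossed-product traces to $\tau$, which is itself the compression of $\widetilde{\Tr}$.

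The main obstacle is precisely the claimed descent of $\bar\sigma$ to a genuine action $\beta$ on the reduced algebra $A$, together with the identification $R\cong A\rtimes_\beta\Gamma$. This is the von Neumann--algebraic counterpart of Furman's reduction of a measure-equivalence coupling of index one to an orbit equivalence, and it is exactly here that the \emph{commonality} of the fundamental domain is indispensable: under $\mathcal{M}\rtimes\Lambda\cong B(\ell^2\Lambda)\ovt A$ one computes $\bar\sigma_\gamma(e_{\lambda\mu})=w_\lambda\sigma_\gamma(p)w_\mu^*$, which does \emph{not} preserve the tensor factor $B(\ell^2\Lambda)\otimes 1$, so the residual $\Gamma$-action cannot simply be read off. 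I would instead build the implementing unitaries $s_\gamma\in\mathcal{U}(R)$ directly from the two fundamental-domain families $\{u_\gamma p\}$ and $\{w_\lambda p\}$ — in effect the non-commutative replacement for the orbit-equivalence cocycle $\Gamma\times X\to\Lambda$ — set $\beta_\gamma:=\Ad(s_\gamma)|_A$, and then verify that the $\{s_\gamma\}$ normalize $A$, that the canonical conditional expectation $\mathcal{P}\to\mathcal{M}\rtimes\Lambda$ compresses to a $\tau$-preserving faithful expectation $R\to A$, and that together these present $R$ as $A\rtimes_\beta\Gamma$. Since the point-perspective tools (measured equivalence relations, cocycles) are unavailable, the cocycle relations and the multiplicativity of $\gamma\mapsto s_\gamma$ must be extracted purely operator-algebraically from the orthogonality relations among the translates of $p$; this is the step I expect to require the most care.
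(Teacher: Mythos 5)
Your global architecture is the same as the paper's: form $\cM\rtimes(\Gamma\times\Lambda)$, realize it in two ways via the amplification isomorphisms $\dpg:\cM\rtimes\Gamma\to\cB(\ell^2\Gamma)\ovt\cM^\Gamma$ and $\dpl:\cM\rtimes\Lambda\to\cB(\ell^2\Lambda)\ovt\cM^\Lambda$ of \cite[Proposition 4.3]{IPR}, and cut down by the projection corresponding to the common fundamental domain $p$ (which is sent to $\omega_{e_\Gamma,e_\Gamma}\otimes 1$ by $\dpg$ and to $\omega_{e_\Lambda,e_\Lambda}\otimes 1$ by $\dpl$ --- this is exactly where commonality of $p$ enters, and your version of that step is fine). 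The genuine gap is the step you yourself flag: showing that the residual $\Gamma$-action on $\cB(\ell^2\Lambda)\ovt\cM^\Lambda$ presents $\cM\rtimes(\Gamma\times\Lambda)$ as $\cB(\ell^2\Lambda)\ovt(A\rtimes_\beta\Gamma)$. That step is essentially the entire content of the proof, and the route you sketch for it would not go through as stated. If you set $s_\gamma:=\sum_{\lambda}p\,u_\gamma w_\lambda\,p$ (the natural noncommutative surrogate for the OE cocycle), then $s_\gamma s_\gamma^*$ and $s_\gamma^* s_\gamma$ acquire off-diagonal terms of the form $p\,\sigma_\gamma(\rho_\lambda(p))\,\rho_{\lambda\mu^{-1}}(p)\,w_{\lambda\mu^{-1}}$ with $\lambda\neq\mu$; in the commutative setting these vanish because the translates of $p$ are commuting indicator functions of disjoint sets, but here $p$, $\sigma_\gamma(p)$, and $\rho_\lambda(p)$ need not commute, so $s_\gamma$ need not be a unitary of the corner and $\Ad(s_\gamma)$ need not define an action. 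You would need a genuinely new idea to repair this, and the proposal does not supply one.

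The paper's resolution is different and avoids corner-level cocycles altogether. Writing $v_\lambda=\cF^*_{\alpha_{\lambda^{-1}}(p)}\cF_p$, which is automatically a unitary (a product of two unitaries from \cite[Proposition 4.2]{IPR}) and lies in $L\Gamma\ovt\cM^\Gamma$ --- hence commutes with the $\rho_\gamma\otimes 1$ --- one computes that the transported action satisfies $\talpha_\lambda=\Ad(w_\lambda)\circ(\id\otimes\alpha_\lambda)$ with $w_\lambda=(\id\otimes\alpha_\lambda)(v_\lambda)$, and then verifies by an explicit $\cM^\Gamma$-valued matrix computation that $\lambda\mapsto w_\lambda$ is a $1$-cocycle for $\id\otimes\alpha$. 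Cocycle conjugacy then gives $(\cB(\ell^2\Gamma)\ovt\cM^\Gamma)\rtimes_{\talpha}\Lambda\cong\cB(\ell^2\Gamma)\ovt(\cM^\Gamma\rtimes_\alpha\Lambda)$, where the action of $\Lambda$ on $\cM^\Gamma$ is the honest restriction of $\alpha$ (which preserves $\cM^\Gamma$ since the actions commute), not a twisted $\Ad(s_\gamma)$-type action. In short: your skeleton is right, but the load-bearing wall is missing, and the brick you propose to build it from does not hold together.
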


\subsection*{Acknowledgements.}
The authors would like to express deep gratitude to Ben Hayes for his valuable insights, continuous support, and constant encouragement throughout this project. This work has benefited profoundly from discussions with Ben Hayes on numerous occasions and from him encouraging us to look into \cite{BV22}. The project began with the authors' meeting at the ``NCGOA Spring Institute 2023" held at the Vanderbilt University, and we thank the organizers of the conference for the opportunity. A.Wu would also like to acknowledge the support from the NSF CAREER award DMS $\#$214473. Finally, we would like to thank the anonymous referee for helpful suggestions and pointing out an argument to us that helped in simplifying the proof of Theorem \ref{thm:graph product vNalg} and extending it to the case of an arbitrary countably infinite graph product.

\section{Preliminaries and Notations}
We set up the notations and collect some facts that will be needed in this article.  

\subsection{Tracial von Neumann algebras and the standard form}
A tracial von Neumann algebra $A$ is endowed with a trace, i.e.,  a faithful, normal, unital linear functional $\tau:A\to\C$ such that $\tau(xy)=\tau(yx)$ for all $x,y\in A$. The trace $\tau$ induces an inner product on $A$ given by $\langle x,y\rangle=\tau(y^*x), x,y\in A$, and we let $L^2(A)$ denote the Hilbert space completion of $A$ with respect to this inner product. When we view an element $x\in A$ as a vector in $L^2(A)$, we denote it by $\hat x$. An element $x\in A$ defines a bounded linear operator on $L^2(A)$ given by $L_x(\hat{y})=\widehat{xy}, y\in A$, and thus we have a representation of $A$ in $\cB(L^2(A))$, called the \textit{standard representation}. There is also a canonical anti-linear conjugation operator $J:L^2(A)\to L^2(A)$, defined by $J(\hat x)=\widehat{x^*}, x\in A$, and we have that $A'=JAJ$, where $A'$ is the commutant of $M$ inside $\cB(L^2(A))$. By the tracial property of $\tau$, for $x\in A$, the operator $R_x$ on $L^2(A)$ defined by $R_x(\hat{y})=\widehat{yx}, y\in A$ is bounded. If we let $\rho(A)=\{R_x:x\in A\}\subset\cB(L^2(A))$, then $\rho(A)=A'$.


\subsection{Free product and amalgamated free product}
 Let $(A_1,\tau_1)$ and $(A_2,\tau_2)$ be tracial von Neumann algebras. The \textit{free product} of $A_1$ and $A_2$ is the unique, up to isomorphism, tracial von Neumann algebra $(A,\tau)$ containing $A_1$ and $A_2$, and is such that $\tau\vert_{A_i}=\tau_i, i=1,2$, $A$ is generated by $A_1\cup A_2$, and $A_1,A_2$ are \textit{free} inside $A$, i.e., $\tau(a_1a_2\cdots a_k)=0$, whenever $a_i\in A_{n_i}$ with $n_i\neq n_{i+1}$ for all $i\in\{1,\ldots, k-1\}$, $n_i\in\{1,2\}$ and $\tau_{n_i}(a_i)=0$ for all $1\leq i\leq k$. We will say that an element $a_1a_2\cdots a_k$ of the algebraic free product $A_1*_{\mathrm{alg}}A_2$ is an {\it alternating centered word} with respect to $\tau$ if $a_i\in A_{n_i}$ with $n_i\neq n_{i+1}$ for all $i\in\{1,\ldots, k-1\}$, $n_i\in\{1,2\}$ and $\tau_{n_i}(a_i)=0$ for all $1\leq i\leq k$. 

For $i=1,2$, let $(A_i,\tau_i)$ be tracial von Neumann algebras, $Q\subset A_i$ be a common von Neumann subalgebra, and $E_i:A_i\to Q$ be faithful, normal conditional expectations. The \textit{amalgamated free product} $(A, E)=(A_1, E_1)*_Q(A_2,E_2)$ is a pair of a von Neumann algebra $A$ generated by $A_1$ and $A_2$ and a faithful normal conditional expectation $E:A\to Q$ such that $A_1$ and $A_2$ are \textit{freely independent} with respect to $E$: $E(a_1a_2\cdots a_k)=0$ whenever $a_{n_i}\in A_{n_i}$ with $n_i\in\{1,2,\}, E_{n_i}(a_i)=0$ for all $1\leq i\leq k$, and $n_i\neq n_{i+1}$ for all $i\in\{1,\ldots,k-1\}$. An element $a_1a_2\cdots a_k\in A$ will be called an \textit{alternating centered word} with respect to $E$ if $a_{n_i}\in A_{n_i}$ with $n_i\in\{1,2\}, E_{n_i}(a_i)=0$ for all $1\leq i\leq k$, and $n_i\neq n_{i+1}$ for all $i\in\{1,\ldots,k-1\}$.

 For the construction and further details on (amalgamated) free products, we refer the reader to \cite{Voi85,VDN92,P93,U99}.


\subsection{Graph product} Let $\mathcal{G}=(V,E)$ be a simple graph with the vertex set $V$ and the edge set $E\subseteq V\times V\setminus\{(v,v) : v\in V\}$. We assume that the graph $\mathcal{G}$ is non-oriented, i.e., $(v,w)\in E$ if and only if $(w,v)\in E$. A word $v_1v_2\cdots v_n$ of vertices in $V$ is called \textit{reduced} if it satisfies the following property: if there exist $k<l$ such that $v_k=v_l$, then there is some $k<j<l$ such that $(v_k,v_j)\notin E$. Let $\mathcal{G}=(V,E)$ be a simple graph, $(A,\tau)$ be a tracial von Neumann algebra, and $\{(A_v,\tau_v) : v\in V\}$ be a family of tracial von Neumann subalgebras of $(A,\tau)$ such that $\tau\vert_{A_v}=\tau_v$ for all $v\in V$. We say that the family $\{(A_v,\tau_v) : v\in V\}$ is $\mathcal{G}$-\textit{independent} if the following property holds: if $v_1\cdots v_n$ is a reduced word and $a_1,\ldots, a_n\in A$ are such that $a_i\in A_{v_i}$ and $\tau(a_i)=0$, then $\tau(a_1\cdots a_n)=0$. On the other hand, given a simple graph $\mathcal{G}=(V,E)$ and a family of tracial von Neumann algebras $\{(A_v,\tau_v) : v\in V\}$, there is a unique, up to isomorphism, tracial von Neumann algebra $(A,\tau)$, called the \textit{graph product} over $\mathcal{G}$ of the family $\{(A_v,\tau_v) : v\in V\}$, and trace-preserving inclusions $\varphi_v: A_v\hookrightarrow A$ such that the family $\{\varphi_v(A_v):v\in V\}$ is $\mathcal{G}$-independent and generates $A$ as a von Neumann algebra (see \cite{Mlo04, CF17}). We denote the graph product $(A,\tau)$ of the family $\{(A_v,\tau_v) : v\in V\}$ by
\begin{align*}
    (A,\tau)=\bigstar_{v\in V}(A_v,\tau_v).
\end{align*} 

\begin{rem}
    If $\mathcal{G}=(V,E)$ is a simple graph, and $\{\Gamma_v : v\in V\}$ is a family of countable discrete groups, then $L(\bigstar_{v\in V}\Gamma_v)=\bigstar_{v\in V}L\Gamma_v$ (see \cite[Remark 3.23]{CF17}).
\end{rem}



\subsection{Modules over tracial von Neumann algebras}
 For details on the proofs of the facts collected in this subsection, we refer the reader to \cite[Chapter 8]{AP17}.

\begin{defn}
    Given von Neumann algebras $A$ and $B$,
    \begin{enumerate}
        \item a \textit{left A-module} is a pair $(\cH,\pi_A)$, where $\cH$ is a  Hilbert space and $\pi_A:A\to\cB(\cH)$ is a normal unital $*$-homomorphism.

        \item a \textit{right B-module} is a pair $(\cH,\pi_B)$, where $\cH$ is a  Hilbert space and $\pi_B:B\to\cB(\cH)$ is a normal unital $*$-anti-homomorphism, i.e., $\pi_B(xy)=\pi_B(y)\pi_B(x)$ for all $x,y\in B$. In other words, $\cH$ is a left $B^{\rm op}$-module, where $B^{\rm op}$ is the opposite algebra.

        \item an $A-B$-\emph{bimodule} is a triple $(\cH,\pi_A,\pi_B)$ such that $(\cH,\pi_A)$ is a left $A$-module, $(\cH,\pi_B)$ is a right $B$-module, and the representations $\pi_A$ and $\pi_B$ commute. For $\xi\in\cH$, $x\in A$, and $y\in B$, we will write $x\xi y$ instead of $\pi_A(x)\pi_B(y)\xi ~(=\pi_B(y)\pi_A(x)\xi).$
    \end{enumerate}
\end{defn}

\begin{defn}
   Let $(A,\tau_A), (B,\tau_B)$ be tracial von Neumann algebras and let $\cH$ be an $A-B$-bimodule. A vector $\xi\in\cH$ is called
   \begin{enumerate}
   \item \textit{tracial} if $\langle x\xi,\xi\rangle=\tau_A(x)$ for every $x\in A$, and $\langle \xi y,\xi\rangle=\tau_B(y)$ for every $y\in B$.
   \item \textit{bi-tracial} if $\langle x\xi y,\xi\rangle=\tau_A(x)\tau_B(y)$ for all $x\in A, y\in B$.
   \item \textit{cyclic} if $\overline{\mathrm{Span}\{x\xi y:x\in A, y\in B\}}=\cH$.
   \end{enumerate}
\end{defn}

Let $(Q,\tau_Q)$ be a tracial von Neumann algebra. Given two left $Q$-modules $\cH$ and $\cK$, we denote by $_Q\cB(\cH,\cK)$ the space of left $Q$-linear bounded maps from $\cH$ into $\cK$, that is
\begin{align*}
    _Q\cB(\cH,\cK)=\{T\in\cB(\cH,\cK): T(x\xi)=x(T\xi) \text{ for all } x\in Q, \xi\in \cH\}.
\end{align*}
We set $_Q\cB(\cH)={_Q\cB(\cH,\cH)}$. It is straightforward to check that $_Q\cB(\cH)=Q'\cap \cB(\cH)$. Moreover, $_Q\cB(\cH)$ is a semi-finite von Neumann algebra equipped with a specific semi-finite trace $\Tr$, depending on $\tau_Q$. Before stating the result that characterizes $\Tr$, observe that, given $S,T\in{_Q\cB(L^2Q,\cH)}$, we have $TS^*\in{_Q\cB(\cH)}$, and $S^*T\in JQJ$, where $J:L^2Q\to L^2Q$ is the canonical conjugation operator. The following is a translation of \cite[Proposition 8.4.2]{AP17} for left $Q$-modules.

\begin{prop}\label{prop: characterizing trace on commutant}
    If $\cH$ is a left $Q$-module over a tracial von Neumann algebra $(Q,\tau_Q)$, then the commutant $_Q\cB(\cH)=Q'\cap\cB(\cH)$ is a semi-finite von Neumann algebra equipped with a canonical faithful normal semi-finite trace $\Tr$ characterized by the equation
    \begin{align*}
        \Tr(TT^*)=\tau_Q(JT^*TJ)
    \end{align*}
    for every left $Q$-linear bounded operator $T:L^2Q\to\cH$.
\end{prop}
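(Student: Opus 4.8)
The plan is to model $\cH$ as a corner of an amplification of the standard module $L^2 Q$, transport the evident semifinite trace from the commutant there, and then confirm the displayed formula by an explicit block computation. To begin, observe that $_Q\cB(\cH)=Q'\cap\cB(\cH)$ is the commutant in $\cB(\cH)$ of the image of $Q$, which is a von Neumann algebra since the representation defining the module is normal; hence $_Q\cB(\cH)$ is itself a von Neumann algebra by the bicommutant theorem.

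First I would invoke the structure theory of normal representations: because $L^2 Q$ is a faithful normal $Q$-module, there is an index set $I$, a projection $p$ in $(JQJ)\ovt\cB(\ell^2(I))$ --- the commutant of $Q$ acting diagonally on $L^2 Q\otimes\ell^2(I)$ --- and a left $Q$-linear unitary identifying $\cH$ with $p(L^2 Q\otimes\ell^2(I))$. Reducing commutants then gives
\[
{_Q\cB(\cH)} = p\big((JQJ)\ovt\cB(\ell^2(I))\big)p.
\]
The algebra $(JQJ)\ovt\cB(\ell^2(I))$ carries the faithful normal semifinite trace $\hat\Tr=\tau^J\otimes\Tr_I$, where $\Tr_I$ is the usual trace on $\cB(\ell^2(I))$ and $\tau^J$ is the trace on $JQJ=Q'$ determined by $\tau^J(JaJ)=\tau_Q(a)$ for $a\in Q$ (this is a trace because $(JaJ)(JbJ)=J(ab)J$). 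Its restriction to the corner is a faithful normal semifinite trace $\Tr$ on $_Q\cB(\cH)$; in particular $_Q\cB(\cH)$ is semifinite.

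It remains to verify the characterizing formula. Fix $T\in{_Q\cB(L^2 Q,\cH)}$ and view it through the identification as a left $Q$-linear map $L^2 Q\to L^2 Q\otimes\ell^2(I)$. Since every left $Q$-linear bounded operator $L^2 Q\to L^2 Q$ is a right multiplication, hence lies in $JQJ$, there exist $x_i\in Q$ with $T\xi=\sum_i (Jx_iJ)\xi\otimes e_i$ for an orthonormal basis $(e_i)$ of $\ell^2(I)$, and $\sum_i x_i^* x_i$ strongly convergent. Then $T^*T=\sum_i (Jx_iJ)^*(Jx_iJ)=J\big(\sum_i x_i^* x_i\big)J$, so
\[
\tau_Q(JT^*TJ)=\sum_i\tau_Q(x_i^* x_i),
\]
while $TT^*$ has block form $\sum_{i,j}(Jx_iJ)(Jx_jJ)^*\otimes e_{ij}$, giving
\[
\Tr(TT^*)=\hat\Tr(TT^*)=\sum_i\tau^J\big((Jx_iJ)(Jx_iJ)^*\big)=\sum_i\tau_Q(x_i x_i^*).
\]
The two sums agree by the trace identity $\tau_Q(x_i x_i^*)=\tau_Q(x_i^* x_i)$. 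Finally, since the operators $TT^*$ weakly generate the definition ideal of $\Tr$ and a normal semifinite trace is determined by its values there, the formula characterizes $\Tr$ and in particular shows it does not depend on the chosen realization, so $\Tr$ is canonical.

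The main obstacle is precisely this consistency: the recipe $\Tr(TT^*)=\tau_Q(JT^*TJ)$ must be well defined (depending only on $TT^*$, not on $T$) and tracial. The amplification model renders existence, normality, and semifiniteness essentially automatic, so the real content is the block computation above, where the single use of traciality of $\tau_Q$, namely $\tau_Q(xx^*)=\tau_Q(x^*x)$, is exactly what reconciles the ``$T^*T$ side'' with the ``$TT^*$ side'' and produces the compatibility between the canonical traces on $_Q\cB(L^2 Q)$ and $_Q\cB(\cH)$.
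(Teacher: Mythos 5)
Your argument is correct, and it is essentially the standard proof of this fact: the paper itself gives no proof, merely describing the proposition as a translation of \cite[Proposition 8.4.2]{AP17} to left modules, and your amplification-plus-corner realization $\cH\cong p(L^2Q\otimes\ell^2(I))$ together with the block computation is exactly the argument given in that reference. Two small points to tidy up: the functional on $JQJ$ should be defined by $\tau^J(JaJ)=\tau_Q(a^*)$ so that it is linear rather than conjugate-linear (it agrees with your formula on positive elements, which is all the computation uses); and the final uniqueness step deserves one more line, namely that every projection in $_Q\cB(\cH)$ is an orthogonal sum of projections onto cyclic submodules $\overline{Q\xi}$, each of which equals $VV^*$ for a left $Q$-linear partial isometry $V:L^2Q\to\cH$, and a normal semifinite trace on a von Neumann algebra is determined by its values on projections.
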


\begin{rem}
    \label{rem: observation about trace on commutant}
    Suppose $(Q,\tau_Q)$ is a tracial von Neumann algebra and $\cH$ is a left $Q$-module. If $\xi\in\cH$ is a tracial vector, then the orthogonal projection $P:\cH\to \overline{\Span(Q\xi)}$ lies in $_Q\cB(\cH)$. Moreover, since $\xi$ is tracial, the operator $U:L^2Q\to\overline{\Span(Q\xi)}$ given by $U\hat{x}=x\xi, x\in Q$ is a unitary. Extending $U$ to an isometry from $L^2Q$ into $\cH$ in an obvious way and applying Proposition \ref{prop: characterizing trace on commutant} to $T=PU:L^2Q\to\cH$ yields that $\Tr(P)=\tau_Q(1)=1$.
\end{rem}


\subsection{Actions on semi-finite von Neumann algebras}\label{sec:actions}
For a semi-finite von Neumann algebra $\cM$ with a faithful normal semi-finite trace $\Tr$, the set $\mathfrak n_{\rm Tr} = \{ x \in \mathcal M \mid {\rm Tr}(x^*x) < \infty \}$ is an ideal. Left multiplication of $\mathcal M$ on $\mathfrak n_{\rm Tr}$ induces a normal faithful representation of $\mathcal M$ in $\mathcal B(L^2(\mathcal M, {\rm Tr}))$, called \textit{the standard representation}, where $L^2(\cM,\Tr)$ is the Hilbert space completion of $\mathfrak n_{\rm Tr}$ under the inner product $\langle a, b \rangle_2 = {\rm Tr}(b^* a)$. 

If $\Gamma \aactson{\sigma} \mathcal M$ is a trace-preserving action of a countable discrete group $\Gamma$ on $\cM$, then $\Gamma$ preserves the $\| \cdot \|_2$-norm on $\mathfrak n_{\rm Tr}$. Therefore, restricted to $\mathfrak n_{\rm Tr}$, the action is isometric with respect to the $\| \cdot \|_2$-norm and hence gives a unitary representation $\sigma^0:\Gamma\to\mathcal U(L^2(\mathcal M, {\rm Tr}))$, called the \textit{Koopman representation}. Considering $\mathcal M \subset \mathcal B(L^2(\mathcal M, {\rm Tr}))$ via the standard representation, we have that the action $\sigma: \Gamma \to {\rm Aut}(\mathcal M, {\rm Tr})$ is unitarily implemented via the Koopman representation, i.e., for $x \in \mathcal M$ and $\gamma \in \Gamma$ we have $\sigma_\gamma(x) = \sigma_\gamma^0 x \sigma_{\gamma^{-1}}^0$ (see \cite[Theorem 3.2]{Haa75}).


\section{Von Neumann Orbit Equivalence}\label{sec:vNOE}

In this section, we define von Neumann orbit equivalence for tracial von Neumann algebras and for countable discrete groups. We shall see that groups are von Neumann orbit equivalent if and only if the corresponding group von Neumann algebras are, and we conclude this section with the proof that von Neumann orbit equivalent tracial von Neumann algebras are von Neumann equivalent in the sense of \cite{IPR}.

\subsection{Von Neumann orbit equivalence for tracial von Neumann algebras}
\begin{thm}\label{thm: rephrase vNOE via homoms}
    Let $(A,\tau_A),\text{ and } (B,\tau_B)$ be tracial von Neumann algebras. Then the following are equivalent.
    \begin{enumerate}
        \item There exists a tracial von Neumann algebra $(Q,\tau_Q)$ and a pointed $A\ovt Q-B$-bimodule $(\cH,\xi)$ such that $\xi\in \cH$ is a cyclic and (bi-)tracial vector for both $A\ovt Q$-module structure, and $Q-B$-bimodule structure. That is, for all $a\in A, b\in B$, and $x\in Q$, \label{item:vNOE of algebras}
        \begin{enumerate}
            \item $\langle (a\otimes x)\xi,\xi\rangle=\tau_A(a)\tau_Q(x)$, and $\langle x\xi b, \xi\rangle =\tau_Q(x)\tau_B(b)$; and
            \item $\overline{\mathrm{Span}((A\ovt Q)\xi)}=\cH=\overline{\mathrm{Span}(Q\xi B)}$.
        \end{enumerate}

        \item There exists a tracial von Neumann algebra $(Q,\tau_Q)$, and a normal $*$-homomorphism $\phi:B\to A\ovt Q$ such that \label{item: condition wrt a homom}
        \begin{enumerate}
            \item $\mathbb E_Q\circ \phi=\tau_B$, where $\mathbb E_Q:A\ovt Q\to Q$ is the normal conditional expectation; and
            \item $\overline{\mathrm{Span}\{x\phi(b):b\in B, x\in Q\}}^{\|\cdot\|_2}=L^2(A\ovt Q)$.
        \end{enumerate}     
    \end{enumerate}
\end{thm}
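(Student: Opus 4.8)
The plan is to prove the two implications separately, the crucial tool in both directions being the identification of a cyclic, (bi-)tracial left $A\ovt Q$-module with the standard representation $L^2(A\ovt Q)$ (taken with respect to the product trace $\tau_A\otimes\tau_Q$), together with the description of the commutant of $A\ovt Q$ in its standard form.

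\textbf{From \eqref{item: condition wrt a homom} to \eqref{item:vNOE of algebras}.} Given $(Q,\tau_Q)$ and $\phi\colon B\to A\ovt Q$ as in the second condition, I would take $\cH=L^2(A\ovt Q)$, let $A\ovt Q$ act on the left by the standard representation, and twist the right action of $B$ through $\phi$, namely $\widehat z\cdot b:=\widehat{z\,\phi(b)}$. Since $\phi$ is a normal $*$-homomorphism this defines a normal right $B$-module structure commuting with the left action, and one sets $\xi=\widehat 1$. The $A\ovt Q$-bi-traciality and cyclicity of $\xi$ are immediate from $\langle\widehat{a\otimes x},\widehat 1\rangle=\tau_A(a)\tau_Q(x)$ and the density of $A\ovt Q$ in $L^2(A\ovt Q)$. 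For the $Q$–$B$ structure I compute $\langle x\xi b,\xi\rangle=\tau_{A\ovt Q}\big((1\otimes x)\phi(b)\big)=\tau_Q\big(x\,\mathbb E_Q(\phi(b))\big)=\tau_Q(x)\tau_B(b)$, using $\tau_{A\ovt Q}=\tau_Q\circ\mathbb E_Q$ and $Q$-bimodularity of $\mathbb E_Q$ for the middle equality and $\mathbb E_Q\circ\phi=\tau_B$ for the last; cyclicity for $Q\xi B$ is exactly the density statement in the second condition.

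\textbf{From \eqref{item:vNOE of algebras} to \eqref{item: condition wrt a homom}.} Since $\xi$ is cyclic and bi-tracial for the left $A\ovt Q$-action, the map $z\mapsto z\xi$ extends to a left-$A\ovt Q$-linear unitary $U\colon L^2(A\ovt Q)\to\cH$ with $U\widehat 1=\xi$: bi-traciality (extended from elementary tensors to all of $A\ovt Q$ by normality of both vector functional and trace) gives $\|z\xi\|^2=\langle z^*z\,\xi,\xi\rangle=\tau_{A\ovt Q}(z^*z)$, and cyclicity makes $U$ surjective. Transporting through $U$, I may assume $\cH=L^2(A\ovt Q)$, $\xi=\widehat 1$, with the standard left action. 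The right $B$-action $\pi_B$ commutes with the left $A\ovt Q$-action, so its image lies in the commutant, which in the standard form is $(A\ovt Q)'=J(A\ovt Q)J=\rho(A\ovt Q)$. As $\rho$ is a normal anti-isomorphism of $A\ovt Q$ onto this commutant and $\pi_B$ is a normal anti-homomorphism, $\phi:=\rho^{-1}\circ\pi_B\colon B\to A\ovt Q$ is a normal $*$-homomorphism, determined by $\widehat z\cdot b=\widehat{z\,\phi(b)}$. It then remains to read off the two conditions: cyclicity $\overline{\mathrm{Span}(Q\xi B)}=\cH$ translates verbatim into the required $\|\cdot\|_2$-density once one observes $x\xi b=\widehat{(1\otimes x)\phi(b)}$, and the same trace computation as above yields $\tau_Q\big(x\,\mathbb E_Q(\phi(b))\big)=\tau_Q(x)\tau_B(b)$ for all $x\in Q$; faithfulness of $\tau_Q$, tested against $x=\big(\mathbb E_Q(\phi(b))-\tau_B(b)1\big)^*$, then forces $\mathbb E_Q(\phi(b))=\tau_B(b)1$.

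I expect the main obstacle to be the direction \eqref{item:vNOE of algebras}$\Rightarrow$\eqref{item: condition wrt a homom}: the essential point is to correctly invoke the commutation theorem for the standard form of $A\ovt Q$ so that the abstract right $B$-action is realized by right multiplication by honest elements of $A\ovt Q$, from which the homomorphism $\phi$ is extracted; the one genuinely computational step is deducing $\mathbb E_Q\circ\phi=\tau_B$ from the traciality of $\xi$ via faithfulness of $\tau_Q$. The reverse direction is essentially bookkeeping.
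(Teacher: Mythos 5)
Your proposal is correct and follows essentially the same route as the paper: both directions use the unitary $U\colon L^2(A\ovt Q)\to\cH$, $\widehat z\mapsto z\xi$, realize the right $B$-action inside the commutant $\rho(A\ovt Q)$ to extract $\phi$, and deduce $\mathbb E_Q\circ\phi=\tau_B$ from the identity $\tau_Q(x\,\mathbb E_Q(\phi(b)))=\tau_Q(x)\tau_B(b)$ via $\tau=\tau_Q\circ\mathbb E_Q$. Your write-up is if anything slightly more explicit than the paper's on why $\phi$ is a normal $*$-homomorphism and on the faithfulness step.
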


\begin{proof}
Let $(\cH,Q,\xi)$ be a triple as in (\ref{item:vNOE of algebras}). We thus obtain a canonical unitary $U\colon \cH\to L^{2}(A\overline{\otimes}Q)$ such that $U(y\xi)=\hat y$ for all $y\in A\overline{\otimes}Q$. Hence we can define a right action of $B$ on $L^{2}(A\overline{\otimes}Q)$ by
\[\eta\cdot b=U(U^{*}(\eta)b), \textnormal{ for all $\eta\in L^{2}(A\overline{\otimes}Q)$, $b\in B$}.\]
For $b\in B$, we let $R_{b}\in \cB(L^{2}(A\overline{\otimes}Q))$ be the operator corresponding to right multiplication by $b$.
Since $\cH$ is an $A\overline{\otimes}Q-B$ bimodule,  and $U$ is $A\ovt Q$-linear, so, the right action of $B$ commutes with the left action of $A\overline{\otimes}Q$ on $L^2(A\ovt Q)$, and hence $R_b\in (A\ovt Q)'\cap\cB(L^2(A\ovt Q))$ for every $b\in B$. Since the commutant of $A\overline{\otimes}Q$ acting on $L^{2}(A\overline{\otimes}Q)$ is $\rho(A\overline{\otimes}Q)$ we define $\phi\colon B\to A\overline{\otimes}Q$ as follows: for $b\in B$, $\phi(b)$ is the unique element in $A\ovt Q$ such that $R_{b}=\rho(\phi(b))$. This is directly checked to be a normal $*$-homomorphism. Moreover, by definition of $\phi$, we have that $\eta\cdot b=\eta\phi(b),$ for every $\eta\in L^2(A\ovt Q), b\in B$. Since $\xi$ is $Q$-$B$ bi-tracial, we have for all $b\in B,x\in Q$ that 
\[\tau_{Q}(x)\tau_{B}(b)=\langle x\xi b,\xi\rangle=\langle U^*(\hat x)b,U^*(\hat 1)\rangle=\langle \widehat{x\phi(b)},\hat 1\rangle=\tau(x\phi(b)),\]
where $\tau$ denotes the trace on $A\ovt Q$. Furthermore, since $\tau_Q\circ\mathbb E_Q=\tau$, we have
\begin{align*}
    \tau_Q(\tau_B(b)x)=\tau(x\phi(b))=\tau_Q(\mathbb E_Q(x\phi(b)))=\tau_Q(x\mathbb E_Q(\phi(b))),
\end{align*}
for all $x\in Q, b\in B$, whence it follows that $\mathbb E_{Q}\circ \phi=\tau_{B}$. Finally,
\[\overline{\mathrm{Span}\{x\phi(b):x\in Q,b\in B\}}^{\|\cdot\|_{2}}=U(\overline{\mathrm{Span}\{x\xi b:x\in Q,b\in B\}})=U(\cH)=L^{2}(A\overline{\otimes}Q).\]

Conversely, suppose condition (\ref{item: condition wrt a homom}) holds, and let $\cH=L^{2}(A\overline{\otimes}Q)$, and $\xi=\hat 1$.  Define a right action of $B$ on $\cH$ by $\eta\cdot b=\eta \phi(b)$. Then for $x\in Q$, $b\in B$ we have 
\[\overline{\mathrm{Span}\{x\xi\cdot b:x\in Q,b\in B\}}^{\|\cdot\|_{2}}=\overline{\mathrm{Span}\{x\phi(b):x\in Q,b\in B\}}^{\|\cdot\|_{2}}=L^{2}(A\overline{\otimes}Q),\]
and for $x\in Q,b\in B$ we have
\[\langle x\xi\cdot b,\xi\rangle=\langle \widehat{x\phi(b)},\hat 1\rangle=\tau(x\phi(b))=\tau_Q(x\mathbb E_{Q}(\phi(b)))=\tau_Q(x)\tau_{B}(b).\]
\end{proof}

\begin{rem}\label{rem: span in reverse order}
    Since the operation of taking adjoints is isometric on $L^2(A\ovt Q)$, in condition (\ref{item: condition wrt a homom}) of Theorem \ref{thm: rephrase vNOE via homoms}, one might equivalently require that $\overline{\Span\{\phi(b)x:b\in B,x\in Q\}}^{\|\cdot\|_{2}}=L^{2}(A\overline{\otimes}Q)$.
\end{rem}

\begin{defn}
    Let $(A,\tau_A)$, and $(B,\tau_B)$ be tracial von Neumann algebras. We say that $(A,\tau_A)$ is \textit{von Neumann orbit equivalent} to $(B,\tau_B)$, denoted $(A,\tau_A)\sim_{\mathrm{vNOE}}(B,\tau_B)$, if either of the two equivalent conditions in Theorem \ref{thm: rephrase vNOE via homoms} is satisfied. If $(A,\tau_A)$ is von Neumann orbit equivalent to $(B,\tau_B)$, then the triple $(\cH,Q,\xi)$ or the pair $(Q,\phi)$ of Theorem \ref{thm: rephrase vNOE via homoms} will be called a vNOE-\textit{coupling} between $(A,\tau_A)$ and $(B,\tau_B)$.
\end{defn}

\begin{rem}
As mentioned in the introduction, the above definition depends on the choice of the traces $\tau_A$ and $\tau_B$. When in a situation where the traces are fixed, and there is no risk of confusion, we will often drop them, and simply say that $A$ and $B$ are vNOE and write $A\sim_{\mathrm{vNOE}}B$.
\end{rem}

\begin{prop}\label{vNOE is an equivalence relation}
    Von Neumann orbit equivalence is an equivalence relation.
\end{prop}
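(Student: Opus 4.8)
The plan is to verify the three axioms of an equivalence relation, using whichever of the two equivalent formulations in Theorem \ref{thm: rephrase vNOE via homoms} is more convenient: the bimodule picture \ref{item:vNOE of algebras} for reflexivity and symmetry, and the homomorphism picture \ref{item: condition wrt a homom} for transitivity. \emph{Reflexivity} is immediate: take $Q=\mathbb C$, so that $A\ovt Q\cong A$, and let $\phi=\mathrm{id}_A\colon A\to A$. Then $\mathbb E_{\mathbb C}\circ\phi=\tau_A$ and $\overline{\Span\{\phi(a):a\in A\}}^{\|\cdot\|_2}=L^2(A)$ trivially, so $(A,\tau_A)\sim_{\mathrm{vNOE}}(A,\tau_A)$; equivalently $(L^2(A),\mathbb C,\hat 1)$ is a coupling in the sense of \ref{item:vNOE of algebras}.

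For \emph{symmetry}, suppose $(\cH,Q,\xi)$ witnesses $(A,\tau_A)\sim_{\mathrm{vNOE}}(B,\tau_B)$ as in \ref{item:vNOE of algebras}. I would pass to the conjugate Hilbert space $\overline\cH$ with conjugate vector $\overline\xi$, which carries the standard $B-(A\ovt Q)$-bimodule structure. Setting $\tilde Q=Q^{\mathrm{op}}$ with $\tau_{\tilde Q}(x^{\mathrm{op}})=\tau_Q(x)$, I reinterpret the right $Q$-action inherited from the right $A\ovt Q$-action as a left $Q^{\mathrm{op}}$-action; since it commutes with the left $B$-action, these assemble into a normal left action of $B\ovt Q^{\mathrm{op}}$, while the right $A$-action persists and commutes with both. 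Thus $\overline\cH$ is a $B\ovt Q^{\mathrm{op}}-A$-bimodule, and a direct computation shows $\overline\xi$ is cyclic and bi-tracial for it: the required identities follow from those for $\xi$ together with the conjugate-space relations $\langle\overline\eta,\overline\zeta\rangle_{\overline\cH}=\langle\zeta,\eta\rangle_\cH$ and $\overline{\tau(z^*)}=\tau(z)$. Hence $(\overline\cH,Q^{\mathrm{op}},\overline\xi)$ witnesses $(B,\tau_B)\sim_{\mathrm{vNOE}}(A,\tau_A)$. This realizes the observation following Definition \ref{defn: vNOE vNalgs} that $\cH$ may be viewed as an $A-B\ovt Q^{\mathrm{op}}$-bimodule.

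\emph{Transitivity} is where I expect the real work. Suppose $(Q_1,\phi)$ witnesses $A\sim_{\mathrm{vNOE}}B$ and $(Q_2,\psi)$ witnesses $B\sim_{\mathrm{vNOE}}C$, with $\phi\colon B\to A\ovt Q_1$ and $\psi\colon C\to B\ovt Q_2$ satisfying \ref{item: condition wrt a homom}. I would set $Q=Q_1\ovt Q_2$ and define the normal $*$-homomorphism $\Phi=(\phi\otimes\mathrm{id}_{Q_2})\circ\psi\colon C\to A\ovt Q_1\ovt Q_2=A\ovt Q$. The conditional-expectation condition is routine: under the identification $A\ovt Q=(A\ovt Q_1)\ovt Q_2$ one has $\mathbb E_Q=\mathbb E_{Q_1}\otimes\mathrm{id}_{Q_2}$, so $\mathbb E_Q\circ\Phi=(\tau_B\otimes\mathrm{id}_{Q_2})\circ\psi=\mathbb E_{Q_2}\circ\psi=\tau_C$, using $\mathbb E_{Q_1}\circ\phi=\tau_B$ and $\mathbb E_{Q_2}\circ\psi=\tau_C$.

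The main obstacle is the density condition $\overline{\Span\{w\Phi(c):w\in Q,\ c\in C\}}^{\|\cdot\|_2}=L^2(A\ovt Q)$, which I would establish in two steps. First, since $\mathbb E_{Q_1}\circ\phi=\tau_B$ forces $\phi$ to be trace-preserving, $\phi\otimes\mathrm{id}_{Q_2}$ extends to an $L^2$-isometry $V\colon L^2(B\ovt Q_2)\to L^2(A\ovt Q)$; applying $V$ to the dense set $\{y\psi(c):y\in Q_2,\ c\in C\}$ of $L^2(B\ovt Q_2)$ and noting $(\phi\otimes\mathrm{id})(1_B\otimes y)=1_A\otimes 1_{Q_1}\otimes y\in Q$ shows that the closed span $K:=\overline{\Span\{w\Phi(c)\}}$ contains $V(L^2(B\ovt Q_2))=L^2(\phi(B)\ovt Q_2)$, in particular every $\widehat{\phi(b)\otimes d}$. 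Second, $K$ is a left $Q$-module, hence invariant under left multiplication by $1_A\otimes Q_1\otimes 1$, so it contains all $\widehat{x_1\phi(b)}\otimes\hat d$ with $x_1\in Q_1,\ b\in B,\ d\in Q_2$; since $\overline{\Span\{x_1\phi(b)\}}^{\|\cdot\|_2}=L^2(A\ovt Q_1)$ by density for the first coupling and $\overline{\Span\{\hat d:d\in Q_2\}}=L^2(Q_2)$, these vectors span $L^2(A\ovt Q_1)\ovt L^2(Q_2)=L^2(A\ovt Q)$. Thus $K=L^2(A\ovt Q)$ and $(Q,\Phi)$ witnesses $A\sim_{\mathrm{vNOE}}C$. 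The delicate points will be the careful bookkeeping of the tensor-leg identifications so that the composed map lands in $A\ovt Q$ with the correct expectation, and confirming throughout (here and in the symmetry step) that the relevant commuting normal actions genuinely assemble into normal actions of the tensor-product algebras.
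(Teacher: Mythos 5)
Your proof is correct and follows essentially the same route as the paper's: reflexivity via $Q=\mathbb C$, symmetry via the conjugate bimodule $(\overline{\cH},Q^{\mathrm{op}},\overline{\xi})$, and transitivity via $Q=Q_1\ovt Q_2$ with the composed homomorphism $(\phi\otimes\mathrm{id}_{Q_2})\circ\psi$ and the identification $\mathbb E_Q=\mathbb E_{Q_1}\otimes\mathrm{id}_{Q_2}$. The only cosmetic difference is in the density step, where you push the dense span of $L^2(B\ovt Q_2)$ forward through the isometry induced by $\phi\otimes\mathrm{id}_{Q_2}$ and then use left $Q_1$-invariance, while the paper works with right $Q$-invariance (via its Remark \ref{rem: span in reverse order}) and reduces to showing $A\otimes 1\otimes 1$ lies in the span; both are sound and amount to the same computation.
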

\begin{proof}
    If $(A,\tau_A)$ is a tracial von Neumann algebra, then taking $Q=\mathbb C, \cH=L^2(A)$, and $\xi=\hat 1\in L^2(A)$ in condition (\ref{item:vNOE of algebras}) of Theorem \ref{thm: rephrase vNOE via homoms} shows that $A\sim_{\mathrm{vNOE}}A$. To see symmetry, let $(A,\tau_A)$ and $(B,\tau_B)$ be tracial von Neumann algebras satisfying condition (\ref{item:vNOE of algebras}) of Theorem \ref{thm: rephrase vNOE via homoms}, and let $\cH, (Q,\tau_Q), \text{ and } \xi\in \cH$ be as in the condition. Note that we can view $\cH$ as an $A-B\ovt Q^{\mathrm{op}}$-bimodule. Consider the conjugate Hilbert space $\overline{\cH}$, and the corresponding canonical $B\ovt Q^{\mathrm{op}}-A$ bimodule structure on $\overline{\cH}$. Then, it is straightforward to check that the triple $(\overline{\cH},Q^{\mathrm{op}},\bar{\xi})$ satisfies (\ref{item:vNOE of algebras}) of Theorem \ref{thm: rephrase vNOE via homoms} and thus, $B\sim_{\mathrm{vNOE}}A$. To show transitivity, we will use condition (\ref{item: condition wrt a homom}) of Theorem \ref{thm: rephrase vNOE via homoms}. To this end, let $(A,\tau_A), (B,\tau_B)$, and $(C,\tau_C)$ be tracial von Neumann algebras. Let $Q_1, Q_2$, and $\phi_1:B\to A\ovt Q_1, \phi_2:C\to B\ovt Q_2$ be as in (\ref{item: condition wrt a homom}) of Theorem \ref{thm: rephrase vNOE via homoms}. Let $Q=Q_1\ovt Q_2$ and let $\phi:C\to A\ovt Q$ be given by
    \begin{align*}
        \phi(c)=(\phi_1\otimes\mathrm{id}_{Q_2})(\phi_2(c)), ~~~ c\in C,
    \end{align*}
    where $\phi_1\otimes\mathrm{id}_{Q_2}:B\ovt Q_2\to A\ovt Q_1\ovt Q_2$ is the natural extension of $\phi_1:B\to A\ovt Q_1$. Let $\mathbb E_{Q_1}:A\ovt Q_1\to Q_1, \mathbb E_{Q_2}:B\ovt Q_2\to Q_2, \text {and }\mathbb E_Q:A\ovt Q_1\ovt Q_2\to Q_1\ovt Q_2$ be normal conditional expectations. Consider the map $\mathbb E_{Q_1}\otimes\mathrm{id}_{Q_2}:A\ovt Q_1\ovt Q_2\to  Q_1\ovt Q_2$. Note that $\mathbb E_Q=\mathbb E_{Q_1}\otimes\mathrm{\id}_{Q_2}$. Therefore,
    \begin{align*}
        \mathbb E_Q\circ\phi=(\mathbb E_{Q_1}\otimes \id_{Q_2})\circ ((\phi_1\otimes\mathrm{id}_{Q_2})\circ\phi_2)=\mathbb E_{Q_2}\circ \phi_2=\tau_C,
    \end{align*}
    where the second to last equality follows from the fact that the following diagram, since $\bE_{Q_1}\circ\phi_1=\tau_B$, is commutative:
    \[
    \begin{tikzcd}[column sep=small]
B\ovt Q_2 \arrow[d,"\mathbb E_{Q_2}"'] \arrow[rr, "\phi_1\otimes\mathrm{id}_{Q_2}"]& &A\ovt Q_1\ovt Q_2\arrow[d,"\mathbb E_{Q_1}\otimes\mathrm{\id}_{Q_2}"]\\
Q_2\arrow[rr,hook,"1\otimes\mathrm{id}_{Q_2}"']& &Q_1\ovt Q_2
\end{tikzcd}\]

Now, consider $V=\overline{\Span\{\phi(c)x : x\in Q, c\in C\}}^{\|\cdot\|_2},$ and note that $V$ is invariant under multiplication on the right by elements of $Q=Q_1\ovt Q_2$. In the light of Remark \ref{rem: span in reverse order}, it suffices to show that $V=L^2(A\ovt Q)$, and for this, since $V$ is invariant under right multiplication by $Q$, it suffices to show that $A\otimes 1\otimes 1\subseteq V$. Recall that 
$$\overline{\Span\{\phi_2(c)x_2 : c\in C, x_2\in Q_2\}}^{\|\cdot\|_2}=L^2(B\ovt Q_2)\supseteq B\ovt Q_2.$$
Hence, we have
$$\overline{\Span\{(\phi_1\otimes\id_{Q_2})(\phi_2(c)(1\otimes x_2)) : c\in C, x_2\in Q_2\}}^{\|\cdot\|_2}\supseteq (\phi_1\otimes\id_{Q_2})(B\ovt Q_2).$$ 
Moreover, since $\phi_1$ is unital and $\phi_1\otimes\id_{Q_2}$ is a homomorphism, we also have that 
\begin{align*}(\phi_1\otimes\id_{Q_2})(\phi_2(c)(1\otimes x_2))=(\phi_1\otimes\id_{Q_2})(\phi_2(c))(1\otimes x_2) \quad \text{ for all } c\in C, x_2\in Q_2.
\end{align*}
 Finally, since $\overline{\Span\{\phi_1(b)x_1 : b\in B, x_1\in Q_1\}}^{\|\cdot\|_2}=L^2(A\ovt Q_1)\supseteq A\ovt Q_1$, the following computation completes the proof.
     \begin{align*}
         V&\supseteq \overline{\Span\{(\phi_1\otimes\id_{Q_2})(\phi_2(c))(x_1\otimes x_2) : c\in C, x_1\in Q_1, x_2\in Q_2\}}^{\|\cdot\|_2}\\
         &\supseteq \overline{\Span\{(\phi_1\otimes\id_{Q_2})((\phi_2(c)(1\otimes x_2))(x_1\otimes 1) : c\in C, x_1\in Q_1, x_2\in Q_2\}}^{\|\cdot\|_2}\\
         &\supseteq \overline{\Span\{(\phi_1\otimes\id_{Q_2})(b\otimes 1)(x_1\otimes 1): b\in B, x_1\in Q_1\}}^{\|\cdot\|_2}\\
         &\supseteq A\otimes 1\otimes 1.
     \end{align*} 
\end{proof}

Checking $\overline{\Span\{\phi(b)x:b\in B,x\in Q\}}^{\|\cdot\|_2}=L^{2}(A\overline{\otimes}Q)$ might not be easy in general. However, the following lemma simplifies verifying it in certain examples. 

\begin{lem}\label{lem:special subalgebra of A via coupling}
Let $(A,\tau_{A}),(B,\tau_{B}),$ and $(Q,\tau_{Q})$ be  tracial von Neumman algebras. Let $\phi\colon B\to A\overline{\otimes}Q$ be a $*$-homomorphism satisfying $\bE_{Q}\circ \phi=\tau_{B}$, where $\mathbb E_Q:A\ovt Q\to Q$ is the normal conditional expectation. Let $V=\overline{\Span\{\phi(b)x:b\in B,x\in Q\}}^{\|\cdot\|_{2}}\subset L^{2}(A\overline{\otimes}Q)$. Then $N=\{a\in A:a\otimes 1\in V\}$ is an SOT-closed subalgebra of $A$. 
\end{lem}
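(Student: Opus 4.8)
The plan is to verify separately that $N$ is a linear subspace, that $N$ is closed under multiplication, and that $N$ is closed in the strong operator topology; that $N$ contains the unit is immediate once we know $\phi(1)=1$, since then $\widehat{1}=\widehat{\phi(1)\cdot 1}\in V$. Linearity is clear, because $a\mapsto\widehat{a\otimes 1}$ is linear and $V$ is a subspace. For SOT-closedness I would use the cyclic trace vector: if $(a_\alpha)$ is a net in $N$ converging to $a\in A$ in the strong operator topology, then evaluating at $\hat 1\in L^2(A)$ gives $\|a_\alpha-a\|_2=\|(a_\alpha-a)\hat 1\|_2\to 0$. Since $\|c\otimes 1\|_2=\|c\|_2$ for every $c\in A$, it follows that $\widehat{a_\alpha\otimes 1}\to\widehat{a\otimes 1}$ in $\|\cdot\|_2$; as each $\widehat{a_\alpha\otimes 1}$ lies in the closed subspace $V$, so does the limit, whence $a\in N$. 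This needs no boundedness hypothesis on the net, because strong convergence already forces $\|\cdot\|_2$-convergence through the single vector $\hat 1$.

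The step I expect to be the main obstacle is closure under multiplication, and the difficulty is structural. From the definition of $V$ one reads off that $V$ is invariant under left multiplication by $\phi(B)$ (as $L_{\phi(b)}\widehat{\phi(b')x}=\widehat{\phi(bb')x}$) and under right multiplication by $1\otimes Q$ (as $R_{1\otimes x}\widehat{\phi(b')x'}=\widehat{\phi(b')x'x}$), but a vector $\widehat{a\otimes 1}$ is associated to an element neither of $\phi(B)$ nor of $1\otimes Q$; in particular one cannot expect $L_{a\otimes 1}V\subseteq V$, and that inclusion is false in general. The idea I would use to circumvent this is to keep a vector $\widehat{a'\otimes 1}\in V$ fixed and realize the product as operators that genuinely preserve $V$ acting on it. Concretely, given $a,a'\in N$, I approximate the left factor: choose finite sums $y_m=\sum_i\phi(b_i^{(m)})(1\otimes x_i^{(m)})$ with $\widehat{y_m}\to\widehat{a\otimes 1}$ in $\|\cdot\|_2$. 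Applying the bounded right-multiplication operator $R_{a'\otimes 1}$ then gives $\widehat{(aa')\otimes 1}=R_{a'\otimes 1}\widehat{a\otimes 1}=\lim_m\widehat{y_m(a'\otimes 1)}$.

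It remains to see that each $\widehat{y_m(a'\otimes 1)}$ lies in $V$. Here I would invoke the elementary commutation $(1\otimes x)(a'\otimes 1)=a'\otimes x=(a'\otimes 1)(1\otimes x)$ in $A\ovt Q$, which rewrites every summand of $y_m(a'\otimes 1)$ as $\phi(b_i^{(m)})(a'\otimes 1)(1\otimes x_i^{(m)})$, whose image in $L^2(A\ovt Q)$ is exactly $L_{\phi(b_i^{(m)})}R_{1\otimes x_i^{(m)}}\widehat{a'\otimes 1}$. Since $\widehat{a'\otimes 1}\in V$ and $V$ is stable first under $R_{1\otimes x_i^{(m)}}$ and then under $L_{\phi(b_i^{(m)})}$ by the two one-sided invariances above, each summand lies in $V$; summing and using that $V$ is norm-closed yields $\widehat{(aa')\otimes 1}\in V$, i.e.\ $aa'\in N$. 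The argument rests only on the two one-sided invariances of $V$, read off directly from its definition, and on the commutation of $A\otimes 1$ with $1\otimes Q$; the hypothesis $\bE_Q\circ\phi=\tau_B$ enters only to force $\phi(1)=1$ (as $\phi(1)$ is a projection with $\bE_Q(1-\phi(1))=0$ and $\bE_Q$ faithful), hence to place the unit in $N$.
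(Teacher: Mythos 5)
Your proposal is correct and follows essentially the same route as the paper: both arguments hinge on the two one-sided invariances of $V$ (left multiplication by $\phi(B)$, right multiplication by $1\otimes Q$), approximate one factor by elements of the spanning set, apply the bounded right-multiplication by the other factor, and use the commutation of $A\otimes 1$ with $1\otimes Q$ to rewrite each summand as an operator preserving $V$ applied to a vector already in $V$. Your added verifications ($\phi(1)=1$ via faithfulness of $\bE_Q$, and the SOT-to-$\|\cdot\|_2$ step) are details the paper leaves implicit, but the substance is identical.
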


\begin{proof}
The fact that $N$ is SOT-closed follows from the fact that SOT-convergence in $A$ implies $\|\cdot\|_{2}$-convergence. 
First note that $V$ is invariant under left multiplication by elements of $\phi(B)$ and right multiplication by elements of $Q$. We prove the following claim, whence the lemma follows immediately.

\textbf{Claim:} For $\eta\in V$, and $a\in N$ we have that $\eta(a\otimes 1)\in V$.

\emph{Proof of Claim.} Given $\eta\in V$, and $a\in N$, let $\{x_{n}\}_{n\in\N}\subset \mathrm{Span}\{\phi(b)x:b\in B,x\in Q\}$ be such that $\|x_{n}-\eta\|_{2}\to 0$. Since $a\otimes 1$ is bounded, it follows that 
\begin{align*}\|x_{n}(a\otimes 1)-\eta(a\otimes 1)\|_{2}\leq \|x_{n}-\eta\|_{2} \|a\|\to 0,
\end{align*}
as $n\to\infty$. Since $V$ is $\|\cdot\|_{2}$-closed, it suffices to show that $x_{n}(a\otimes 1)\in V$ for all $n\in \N$. To this end, fix $n\in \N$, and write $x_{n}=\sum_{j=1}^{k}\phi(b_{j})y_{j}$ with $b_{j}\in B,y_{j}\in Q$. Then,
\[x_{n}(a\otimes 1)=\sum_{j=1}^{k}\phi(b_{j})y_{j}(a\otimes 1)=\sum_{j=1}^{k}\phi(b_{j})(a\otimes 1)y_{j},\]
where, in the last equality, we use that $A \text{ and } Q$ commute in $A\overline{\otimes}Q$. Since we already noted that $V$ is invariant under left multiplication by $\phi(B)$ and right multiplication by $Q$, and $a\otimes 1\in V$, it follows that $x_{n}(a\otimes 1)\in V$.     
\end{proof}

\begin{rem}
We do \emph{not} know if $N$ is a $*$-subalgebra. 
\end{rem}

\begin{thm}\label{prop: vNOE preserves free products}
If $(A_i,\tau_{A_i}), (B_i,\tau_{B_i}), ~i=1,2$ are tracial von Neumann algebras such that $(A_i,\tau_{A_i})\sim_{\mathrm{vNOE}}(B_i,\tau_{B_i}),~ i=1,2$, then, $(A_1*A_2,\tau_{A_1}*\tau_{A_2})\sim_{\mathrm{vNOE}}(B_1*B_2,\tau_{B_1}*\tau_{B_2})$.
\end{thm}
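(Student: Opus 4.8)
The plan is to work throughout with the homomorphism characterization, condition (\ref{item: condition wrt a homom}) of Theorem \ref{thm: rephrase vNOE via homoms}. For $i=1,2$, fix vNOE-couplings $(Q_i,\phi_i)$, so that $\phi_i\colon B_i\to A_i\ovt Q_i$ is a normal $*$-homomorphism with $\mathbb E_{Q_i}\circ\phi_i=\tau_{B_i}$ and $\overline{\Span\{\phi_i(b)x:b\in B_i,x\in Q_i\}}^{\|\cdot\|_2}=L^2(A_i\ovt Q_i)$. Write $\mathbf A=A_1*A_2$, $\mathbf B=B_1*B_2$, $\mathbf Q=Q_1*Q_2$ (a tracial von Neumann algebra with the free product trace), and set $N=\mathbf A\ovt\mathbf Q$ with trace $\tau_N=\tau_{\mathbf A}\otimes\tau_{\mathbf Q}$ and conditional expectation $\mathbb E_{\mathbf Q}=\tau_{\mathbf A}\otimes\id_{\mathbf Q}\colon N\to\mathbf Q$. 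The trace-preserving inclusions $A_i\hookrightarrow\mathbf A$ and $Q_i\hookrightarrow\mathbf Q$ induce a trace-preserving embedding $A_i\ovt Q_i\hookrightarrow N$, so I may regard each $\phi_i$ as a $*$-homomorphism $\psi_i\colon B_i\to N$ with image in $A_i\ovt Q_i$. The goal is to assemble the $\psi_i$ into a single homomorphism $\phi\colon\mathbf B\to N$ witnessing $\mathbf A\sim_{\mathrm{vNOE}}\mathbf B$ with coupling $(\mathbf Q,\phi)$.

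The heart of the argument is a moment computation. Let $b^{(1)}\cdots b^{(k)}$ be an alternating centered word in $\mathbf B$, i.e.\ $b^{(j)}\in B_{n_j}$ with $\tau_{B_{n_j}}(b^{(j)})=0$ and $n_1\neq\cdots\neq n_k$, and put $P_j=\phi_{n_j}(b^{(j)})\in A_{n_j}\ovt Q_{n_j}$. Since $\mathbb E_{Q_{n_j}}(P_j)=\tau_{B_{n_j}}(b^{(j)})1=0$, equivalently $(\tau_{A_{n_j}}\otimes\id)(P_j)=0$, I claim $\mathbb E_{\mathbf Q}(P_1\cdots P_k)=0$. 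To see this, use Kaplansky density to approximate each $P_j$ in the strong operator topology, with uniformly bounded norm, by finite sums in $A_{n_j}\otimes_{\mathrm{alg}}Q_{n_j}$; subtracting $1\otimes\mathbb E_{Q_{n_j}}(\cdot)$ from the approximants (which tends to $0$ because $\mathbb E_{Q_{n_j}}(P_j)=0$) lets me assume each $A$-leg lies in $A_{n_j}^\circ=\ker\tau_{A_{n_j}}$. As multiplication is jointly strong-operator continuous on bounded sets and $\mathbb E_{\mathbf Q}$ is normal, it suffices to evaluate $\mathbb E_{\mathbf Q}$ on products of such approximants; expanding gives a sum of terms $(a_{1}\cdots a_{k})\otimes(q_{1}\cdots q_{k})$ with $a_{j}\in A_{n_j}^\circ$, and since $n_1\neq\cdots\neq n_k$ the first leg $a_1\cdots a_k$ is an alternating centered word in $\mathbf A$, so $\tau_{\mathbf A}(a_1\cdots a_k)=0$. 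This proves the claim, which now does double duty. First, since $\tau_N=\tau_{\mathbf Q}\circ\mathbb E_{\mathbf Q}$ and each $P_j$ is $\tau_N$-centered, it shows $\psi_1(B_1)$ and $\psi_2(B_2)$ are free in $(N,\tau_N)$; as $\tau_N\circ\psi_i=\tau_{B_i}$, the universal property of the free product yields a normal, trace-preserving $*$-homomorphism $\phi\colon\mathbf B\to N$ restricting to $\psi_i$. Second, it gives $\mathbb E_{\mathbf Q}(\phi(w))=0$ for every alternating centered word $w$; since such words and $1$ span the SOT-dense $*$-subalgebra generated by $B_1,B_2$, and both $\mathbb E_{\mathbf Q}\circ\phi$ and $b\mapsto\tau_{\mathbf B}(b)1$ are normal, the conditional-expectation requirement of (\ref{item: condition wrt a homom}), namely $\mathbb E_{\mathbf Q}\circ\phi=\tau_{\mathbf B}$, follows.

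For the density requirement of (\ref{item: condition wrt a homom}) I would invoke Lemma \ref{lem:special subalgebra of A via coupling} together with Remark \ref{rem: span in reverse order}. Let $V=\overline{\Span\{\phi(b)x:b\in\mathbf B,x\in\mathbf Q\}}^{\|\cdot\|_2}$. Then $V$ is invariant under right multiplication by $\mathbf Q$, so from $\mathbf A\otimes1\subseteq V$ one gets $a\otimes x=(a\otimes1)(1\otimes x)\in V$ for all $a,x$, whence $V=L^2(N)$; thus it suffices to show the SOT-closed, unital subalgebra $N_0=\{a\in\mathbf A:a\otimes1\in V\}$ of Lemma \ref{lem:special subalgebra of A via coupling} equals $\mathbf A$. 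For each $i$, the relation $\phi|_{B_i}=\psi_i$ gives $\{\phi(b)x:b\in B_i,x\in Q_i\}\subseteq\Span\{\phi(b)x\}$, and since $\tau_N$ restricts to $\tau_{A_i\ovt Q_i}$ on $A_i\ovt Q_i$ the $\|\cdot\|_2$-norms agree, so the $i$-th density hypothesis yields $L^2(A_i\ovt Q_i)=\overline{\Span\{\phi_i(b)x\}}\subseteq V$; in particular $A_i\otimes1\subseteq V$, i.e.\ $A_i\subseteq N_0$. As the $A_i$ are $*$-closed, the algebra they generate is an SOT-dense $*$-subalgebra of $\mathbf A$, and as $N_0$ is SOT-closed and unital we conclude $N_0=\mathbf A$, completing the density requirement and hence the proof.

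The main obstacle is the moment computation of the second paragraph: the $P_j$ are not elementary tensors, so one must pass from the identity $(\tau_{A_{n_j}}\otimes\id)(P_j)=0$ to a genuine bounded SOT-approximation by \emph{centered} elementary tensors and then control the $k$-fold product under $\mathbb E_{\mathbf Q}$. The freeness of $A_1^\circ$ and $A_2^\circ$ in $\mathbf A$ is exactly what makes the alternating structure collapse; conceptually this expresses that $N=(A_1\ovt\mathbf Q)*_{\mathbf Q}(A_2\ovt\mathbf Q)$ is an amalgamated free product over $\mathbf Q$ with each $\psi_i(B_i)$ sitting $\mathbb E_{\mathbf Q}$-centered in the $i$-th factor. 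Once this computation is secured, the construction of $\phi$ and both verifications are formal.
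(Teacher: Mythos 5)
Your proof is correct, but it takes a genuinely different route from the paper's. The paper first invokes transitivity of vNOE (Proposition \ref{vNOE is an equivalence relation}) to reduce to showing $A*C\sim_{\mathrm{vNOE}}B*C$ for a fixed auxiliary algebra $C$; it then keeps the original coupling algebra $Q$, identifies $(A*C)\ovt Q$ with the amalgamated free product $(A\ovt Q)*_Q(C\ovt Q)$, and sends $b\mapsto\phi(b)*_Q1$, $c\mapsto1*_Qc$, so that $\mathbb E_Q\circ\widetilde\phi=\tau_{B*C}$ falls out of freeness with respect to $\mathbb E_Q$ rather than from a hand computation. You instead treat both factors simultaneously, take the new coupling algebra to be $Q_1*Q_2$, and verify by an explicit Kaplansky-density moment computation that the images $\psi_i(B_i)$ are free in $(N,\tau_N)$ and that $\mathbb E_{\mathbf Q}$ vanishes on alternating centered words; as you yourself note, this computation is exactly the amalgamated freeness of $A_1\ovt\mathbf Q$ and $A_2\ovt\mathbf Q$ over $\mathbf Q$ inside $\mathbf A\ovt\mathbf Q$, which the paper gets for free from the identification above. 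Both arguments then finish identically, using Lemma \ref{lem:special subalgebra of A via coupling} together with Remark \ref{rem: span in reverse order} to reduce the density condition to $A_i\otimes1\subseteq V$. What the paper's route buys is brevity (no moment estimate) and a clean appeal to universal properties; what yours buys is self-containedness and a template that handles countably many free factors in one step --- indeed it is essentially the paper's proof of Theorem \ref{thm:graph product vNalg} specialized to the edgeless graph on two vertices, except that there the coupling algebra is taken to be $Q_1\ovt Q_2$ rather than $Q_1*Q_2$ (either choice works, since the vanishing in your moment computation comes entirely from the $A$-legs).
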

\begin{proof}
    Since vNOE is an equivalence relation, it suffices to show that if $A\sim_{\mathrm{vNOE}}B$ and if $(C,\tau_C)$ is another tracial von Neumann algebra, then $A*C\sim_{\mathrm{vNOE}}B*C$. Let $(Q,\tau_Q)$ be a tracial von Neumann algebra and $\phi\colon B\to A\overline{\otimes}Q$ be a $*$-homomorphism as in condition (\ref{item: condition wrt a homom}) of Theorem \ref{thm: rephrase vNOE via homoms}. For tracial von Neumann algebra $(C,\tau_{C})$, we view $(A*C)\overline{\otimes}Q$ as $(A\overline{\otimes}Q)*_{Q}(C\overline{\otimes}Q)$. Define $\widetilde{\phi}_{0}\colon B*_{\textnormal{alg}}C\to(A\overline{\otimes}Q)*_{Q}(C\overline{\otimes}Q)$ by declaring that $\widetilde{\phi}_{0}(b)=\phi(b)*_{Q}1$ for $b\in B$ and $\widetilde{\phi}_{0}(c)=1*_Qc$ for $c\in C$. Let $\bE_Q:(A*C)\ovt Q\to Q$ be the normal conditional expectation. Note that $\bE_{Q}\circ \phi=\tau_{B}$ and $\bE_{Q}|_{C\ovt Q}=\tau_{C}\otimes\id_Q$. Therefore, if $x\in B*_{\textnormal{alg}}C$ is an alternating centered word with respect to $\tau_{B*C}$, then $\widetilde{\phi}_{0}(x)$ is an alternating centered word with respect to $\bE_{Q}$. Hence $\bE_{Q}\circ \widetilde{\phi}_{0}=\tau_{B*C}$. Since $\bE_Q$ is trace preserving, it follows that $\widetilde{\phi}_{0}$ is trace-preserving, and thus extends to a unique trace-preserving $*$-homomorphism $\widetilde{\phi}\colon B*C\to (A\overline{\otimes}Q)*_{Q}(C\overline{\otimes}Q)$. Moreover, by continuity we still have $\bE_{Q}\circ \widetilde{\phi}=\tau_{B*C}$. In light of Remark \ref{rem: span in reverse order}, it thus remains to check that $\overline{\Span\{\widetilde{\phi}(x)y : x\in B*C, y\in Q\}}^{\|\cdot\|_2}=L^2((A*C)\ovt Q)$. To this end, set $V=\overline{\Span\{\widetilde{\phi}(x)y:x\in B*C,y\in Q\}}^{\|\cdot\|_{2}}$. Since $V$ is invariant under right multiplication by $Q$, to show that $V=L^{2}((A*C)\ovt Q)$, it suffices to show that $ (A*C)\otimes 1\subseteq V$. For this, by Lemma \ref{lem:special subalgebra of A via coupling}, it suffices to show that $V$ contains $A\otimes 1$ and $C\otimes 1$. That $C\otimes 1\subseteq V$, follows from the fact that $\widetilde{\phi}$ takes the copy of $C$ in $B*C$ to the copy of $C$ in $(A*C)\overline{\otimes}Q$, and since $\overline{\Span\{\phi(b)y:b\in B,y\in Q\}}^{\|\cdot\|_2}=L^{2}(A\overline{\otimes}Q)$, we also have that $A\otimes 1\subseteq V$. 
\end{proof}

We conclude this subsection by proving Theorem \ref{thm:graph product vNalg}, which we recall below.

\begin{thm}
Let $\mathcal{G}=(V,E)$ be a simple graph, with at most countably infinite vertices. Let $(A,\tau_A)$ and $(B,\tau_B)$ be two graph products over $\mathcal{G}$, with tracial vertex von Neumann algebras $\{(A_v,\tau_{A_v})\}_{v\in V}$ and $\{(B_v,\tau_{B_v})\}_{v\in V}$, respectively. If $(A_v,\tau_{A_v})\sim_{\mathrm{vNOE}}(B_v,\tau_{B_v})$ for every $v\in V$, then $(A,\tau_A)\sim_{\mathrm{vNOE}}(B,\tau_B)$.
\end{thm}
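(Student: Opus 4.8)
The plan is to construct a single vNOE-coupling for the whole graph product by assembling the vertex couplings, working throughout with characterization (\ref{item: condition wrt a homom}) of Theorem \ref{thm: rephrase vNOE via homoms} (via $*$-homomorphisms), exactly as in the proof of Theorem \ref{prop: vNOE preserves free products}, but treating all vertices \emph{simultaneously} so the argument is insensitive to $V$ being infinite. For each $v$, fix a vNOE-coupling $(Q_v,\phi_v)$, so $\phi_v\colon B_v\to A_v\ovt Q_v$ satisfies $\mathbb E_{Q_v}\circ\phi_v=\tau_{B_v}$ and $\overline{\Span\{\phi_v(c)y:c\in B_v,y\in Q_v\}}^{\|\cdot\|_2}=L^2(A_v\ovt Q_v)$; note each $\phi_v$ is trace preserving, since $\tau_{A_v\ovt Q_v}(\phi_v(c))=\tau_{Q_v}(\mathbb E_{Q_v}(\phi_v(c)))=\tau_{B_v}(c)$. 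Set $Q=\bigstar_{v\in V}(Q_v,\tau_{Q_v})$ (the graph product over the same graph $\mathcal G$; any trace-preserving assembly of the $Q_v$ into one tracial algebra would serve equally well), and let $\iota_A\colon A_v\hookrightarrow A$ and $\iota_Q\colon Q_v\hookrightarrow Q$ be the canonical trace-preserving inclusions. These combine into trace-preserving $*$-embeddings $\iota_v=\iota_A\otimes\iota_Q\colon A_v\ovt Q_v\hookrightarrow A\ovt Q$, and I define $\psi_v:=\iota_v\circ\phi_v\colon B_v\to A\ovt Q$.

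The heart of the argument is to verify that the family $\{\psi_v(B_v)\}_{v\in V}$ is $\mathcal G$-independent inside $A\ovt Q$; granting this, the uniqueness of the graph product yields a trace-preserving $*$-homomorphism $\phi\colon B=\bigstar_v B_v\to A\ovt Q$ with $\phi|_{B_v}=\psi_v$. The key point is that if $c\in B_v$ is centered then $\mathbb E_{Q_v}(\phi_v(c))=0$, i.e. $\phi_v(c)\in\ker(\tau_{A_v}\otimes\id_{Q_v})$, so $\phi_v(c)$ has vanishing $A_v$-scalar-slice. Approximating $\phi_v(c)$ in $\|\cdot\|_2$ by uniformly bounded finite sums $\sum_k\mathring{a}_k\otimes x_k$ with each $\mathring{a}_k\in A_v$ centered, one computes for a reduced word $v_1\cdots v_n$ and centered $c_i\in B_{v_i}$ that
\[
\mathbb E_Q\big(\psi_{v_1}(c_1)\cdots\psi_{v_n}(c_n)\big)=\sum_{\vec k}\tau_A\big(\textstyle\prod_i\iota_A(\mathring{a}^{(i)}_{k_i})\big)\,\big(\textstyle\prod_i\iota_Q(x^{(i)}_{k_i})\big)=0,
\]
because each coefficient $\tau_A(\prod_i\iota_A(\mathring{a}^{(i)}_{k_i}))$ vanishes by $\mathcal G$-independence of $\{A_v\}$ in $A$ (a product of centered letters along a reduced word); joint $\|\cdot\|_2$-continuity of multiplication on norm-bounded sets removes the approximation. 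Applying $\tau_Q$ gives $\tau_{A\ovt Q}(\psi_{v_1}(c_1)\cdots\psi_{v_n}(c_n))=0$, which is exactly $\mathcal G$-independence; the same computation, together with $\tau_B$ of a reduced centered word being $0$ and normality, yields condition (\ref{item: condition wrt a homom})(a), namely $\mathbb E_Q\circ\phi=\tau_B$.

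It then remains to verify the density condition, for which I invoke Lemma \ref{lem:special subalgebra of A via coupling} and Remark \ref{rem: span in reverse order}. Let $V=\overline{\Span\{\phi(b)x:b\in B,x\in Q\}}^{\|\cdot\|_2}$; it is right-$Q$-invariant, so $N=\{a\in A:a\otimes 1\in V\}$ is an SOT-closed subalgebra of $A$. For each $v$ and $a\in A_v$, the vertex density condition writes $a\otimes 1\in L^2(A_v\ovt Q_v)$ as a $\|\cdot\|_2$-limit of sums $\sum_j\phi_v(c_j)y_j$; applying the $L^2$-isometry induced by $\iota_v$ sends this to $a\otimes 1\in L^2(A\ovt Q)$ being a limit of $\sum_j\phi(c_j)\iota_Q(y_j)\in\Span\{\phi(b)x\}$, whence $A_v\subseteq N$. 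Since each $A_v$ is a $*$-subalgebra, the algebra generated by $\bigcup_v A_v$ is automatically $*$-closed and SOT-dense in $A$, so the SOT-closed subalgebra $N$ equals $A$; thus $A\otimes 1\subseteq V$, and right-$Q$-invariance forces $V=L^2(A\ovt Q)$. By Theorem \ref{thm: rephrase vNOE via homoms}, $(Q,\phi)$ is a vNOE-coupling and $(A,\tau_A)\sim_{\mathrm{vNOE}}(B,\tau_B)$.

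The main obstacle is the $\mathcal G$-independence verification in the second paragraph: it is precisely where the vertex-level hypotheses $\mathbb E_{Q_v}\circ\phi_v=\tau_{B_v}$ and the ambient $\mathcal G$-independence of $\{A_v\}$ in $A$ must be combined, and where the approximation needed to make the (generally infinite) $L^2$-expansions of $\phi_v(c)$ rigorous has to be carried out with uniform norm control. Everything else is either a direct transcription of the free-product argument of Theorem \ref{prop: vNOE preserves free products} or a routine application of Lemma \ref{lem:special subalgebra of A via coupling}.
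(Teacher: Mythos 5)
Your proof is correct and follows the same overall strategy as the paper's: fix vertex couplings $(Q_v,\phi_v)$, assemble them into a single $*$-homomorphism on the graph product, use $\mathcal G$-independence of the vertex algebras in the target (via bounded $\|\cdot\|_2$-approximation of each $\phi_v(c)$ by elementary tensors with centered first leg) to get trace-preservation and hence the extension to the von Neumann closure, and then verify the density condition. Two points differ. First, the paper takes $Q=\ovt_{v\in V}Q_v$ rather than your $\bigstar_{v\in V}Q_v$; both work, but your parenthetical that any trace-preserving assembly of the $Q_v$ would serve is too strong: you need $Q_v$ and $Q_w$ to commute for adjacent $v,w$ so that $\psi_v(B_v)$ and $\psi_w(B_w)$ commute and $\phi$ is well defined on the algebraic graph product (a free product of the $Q_v$ would not do). Second, and more substantively, for the density step the paper runs an induction on the length of reduced words, proving $\overline{\Span(\phi_{v_1}(A_{v_1})\cdots\phi_{v_n}(A_{v_n})(1\otimes Q))}=\overline{\Span(B_{v_1}\cdots B_{v_n}(1\otimes Q))}$ directly, whereas you transplant the free-product argument of Theorem \ref{prop: vNOE preserves free products}: show $A_v\otimes 1\subseteq V$ for each $v$ and invoke Lemma \ref{lem:special subalgebra of A via coupling} together with the SOT-density in $A$ of the unital $*$-algebra generated by $\bigcup_v A_v$. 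Your route is shorter and arguably cleaner, and correctly exploits that the lemma only gives an SOT-closed subalgebra (not a $*$-subalgebra), since the generating set is already a $*$-algebra; the paper's induction instead produces an explicit matching of word subspaces. Both arguments are complete and handle countably infinite $V$ equally well.
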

\begin{proof}
 Let $(Q_v,\phi_v)$ be a vNOE-coupling between $(A_v,\tau_{A_v})$ and $(B_v,\tau_{B_v})$, where $\phi_v:A_v\to B_v\ovt Q_v$ is a normal unital $*$-homomorphism satisfying
 \begin{align*}
     \bE_v\circ\phi_v=\tau_{A_v} \qquad \text{ and } \qquad \overline{\Span\{\phi_v(a)x: a\in A_v, x\in Q_v\}}^{\|\cdot\|_2}=L^2(B_v\ovt Q_v),
 \end{align*}
 here, $\bE_v:B_v\ovt Q_v\to Q_v$ is the normal conditional expectation. Define $(Q,\tau_Q)$ to be the tensor product $\ovt_{v\in V}(Q_v,\tau_{Q_v})$. We now define a $*$-homomorphism $\phi:A\to B\ovt Q$ and show that it has the desired properties. To this end, first note that if $v$ and $w$ are two distinct vertices connected by an edge, then $B_v$ commutes with $B_w$ and $Q_v$ commutes with $Q_w$ as $v\neq w$. Thus, $\phi_v(A_v)$ and $\phi_w(A_w)$ commute inside $B\ovt Q$, and therefore, we can define $\phi$ on the algebraic graph product (i.e., the universal unital $*$-algebra generated by the $\{A_v\}_{v\in V}$ subject to the relation that $A_v$ commutes with $A_w$ whenever $v$ and $w$ are connected by an edge). In particular, note that $\phi(a)=\phi_v(a)$, whenever $a\in A_v$. Next, we show that $\phi$ extends to a normal unital $*$-homomorphism on $A$ and satisfies $\bE_Q(\phi(a))=\tau_A(a)$ for all $a\in A$, where $\bE_Q:B\ovt Q\to Q$ is the normal conditional expectation. To see this, we note that for any reduced word $v=v_1\cdots v_n$ of vertices, and elements $a_i\in A_{v_i}$ of trace zero, $1\leq i\leq n$, we have that
 \begin{align*}
     \bE_Q(\phi_{v_1}(a_1)\cdots\phi_{v_n}(a_n))=0.
 \end{align*}
Indeed, for every $i\in\{1,\ldots,n\}$, we have $\bE_{v_i}(\phi_{v_i}(a_i))=\tau_{A_{v_i}}(a_i)=0,$ so, $\phi_{v_i}(a_i)$ can be approximated by linear combinations of elements of the form $b_{v_i}\otimes q_{v_i}$, where $b_{v_i}\in B_{v_i}, q_{v_i}\in Q_{v_i}$ with $\tau_{B_{v_i}}(b_{v_i})=0$. Thus, $\bE_Q(\phi_{v_1}(a_1)\cdots\phi_{v_n}(a_n))=0$ and hence it follows that the map $\phi$, defined on the algebraic graph product, is trace-preserving. Therefore, by \cite[Proposition 3.22]{CF17}, $\phi$ extends to a well-defined normal unital $*$-homomorphism on $A$. Furthermore, if either $a=1$ or $a\in A$ is any reduced operator (see \cite[Definition 3.10]{CF17}), then we have seen that $\bE_Q(\phi(a))=0=\tau_A(a)$. Since the linear span of $1$ and reduced operators is a dense $*$-subalgebra of $A$, we conclude that $\bE_Q(\phi(a))=\tau_A(a)$ for all $a\in A$. Finally, to show that $\overline{\Span\{\phi(a)x:a\in A, x\in Q\}}^{\|\cdot\|_2}=L^2(B\ovt Q)$, it suffices to show that
\begin{align}\label{*}\tag{$*$}
    \overline{\Span(\phi_{v_1}(A_{v_1})\cdots\phi_n(A_{v_n})(1\otimes Q))}^{\|\cdot\|_2}=\overline{\Span(B_{v_1}\cdots B_{v_n}(1\otimes Q))}^{\|\cdot\|_2},
\end{align}
for any reduced word $v=v_1\cdots v_n$ of vertices and for all $n$. We proceed by induction. It is straightforward to see that for any vertex $v$, $\overline{\Span(\phi_v(A_v)(1\otimes Q))}^{\|\cdot\|_2}=L^2(B_v\ovt Q)=\overline{\Span(B_v(1\otimes Q))}^{\|\cdot\|_2}$. Suppose that \eqref{*} holds for any reduced word $v_1\cdots v_n$ of length $n$ and let $v_1\cdots v_nv_{n+1}$ be any reduced word of length $n+1$. Then,
\begin{align*}
  \overline{\Span(\phi_{v_1}(A_{v_1})\cdots\phi_{v_{n+1}}(A_{n+1})(1\otimes Q))}^{\|\cdot\|_2}&=\overline{\Span(\phi_{v_1}(A_{v_1})B_{v_2}\cdots B_{v_{n+1}}(1\otimes Q))}^{\|\cdot\|_2}\\
  &=\overline{\Span(\phi_{v_1}(A_{v_1})(1\otimes Q)(B_{v_2}\cdots B_{v_{n+1}}\otimes 1))}^{\|\cdot\|_2}\\
  &=\overline{\Span(B_{v_1}(1\otimes Q)(B_{v_2}\cdots B_{v_{n+1}}\otimes 1))}^{\|\cdot\|_2}\\
  &=\overline{\Span(B_{v_1}B_{v_2}\cdots B_{v_{n+1}}(1\otimes Q))}^{\|\cdot\|_2}.
\end{align*}
Thus, $(A,\tau_A)\sim_{\mathrm{vNOE}}(B,\tau_B)$.
\end{proof}

\subsection{Von Neumann orbit equivalence for groups} Let $\Gamma\actson^\sigma\cM$ be an action of a countable discrete group $\Gamma$ on a von Neumann algebra $\cM$. A \textit{fundamental domain} for the action is a projection $p\in \cM$ such that the projections $\{\sigma_\gamma(p)\}_{\gamma\in\Gamma}$ are pairwise orthogonal and $\sum_{\gamma\in\Gamma}\sigma_\gamma(p)=1$, where the sum converges in the strong operator topology. Two countable discrete groups $\Gamma$ and $\Lambda$ are said to be \textit{von Neumann equivalent}, denoted $\Gamma\sim_{\mathrm{vNE}}\Lambda$, if there exists a semi-finite von Neumann algebra $(\cM,\Tr)$ with a faithful normal semi-finite trace $\Tr$, and commuting trace-preserving actions $\Gamma\actson^\sigma\cM$ and $\Lambda\actson^\alpha\cM$ such that each action admits a finite-trace fundamental domain. Such an $\cM$ is called a \textit{von Neumann coupling} between $\Gamma$ and $\Lambda$.

\begin{defn}
  Two countable groups $\Gamma$ and $\Lambda$ are said to be \textit{von Neumann orbit equivalent}, denoted $\Gamma\sim_{\mathrm{vNOE}}\Lambda$ if there exists a von Neumann coupling between $\Gamma$ and $\Lambda$ with a common fundamental domain.  
\end{defn}

\begin{thm}\label{thm: gp vNOE iff gp vNalg vNOE}
    For countable discrete groups $\Gamma$ and $\Lambda$, $\Gamma\sim_{\mathrm{vNOE}}\Lambda$ if and only if $L\Gamma\sim_{\mathrm{vNOE}}L\Lambda$.
\end{thm}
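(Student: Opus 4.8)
The plan is to prove both implications by translating between the group-theoretic data (a von Neumann coupling $(\mathcal M, \Tr)$ with commuting $\Gamma$- and $\Lambda$-actions sharing a common fundamental domain $p$) and the algebra-theoretic data (a tracial $(Q,\tau_Q)$ and a $*$-homomorphism $\phi\colon L\Lambda \to L\Gamma \ovt Q$ as in Theorem \ref{thm: rephrase vNOE via homoms}(\ref{item: condition wrt a homom})). I expect the most natural route to use the bimodule formulation (\ref{item:vNOE of algebras}), since a von Neumann coupling with commuting actions and a fundamental domain is essentially a bimodule with a distinguished vector.

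First, for the forward direction, suppose $\Gamma \sim_{\mathrm{vNOE}} \Lambda$, witnessed by $(\mathcal M, \Tr)$ with commuting actions $\Gamma \aactson{\sigma} \mathcal M$, $\Lambda \aactson{\alpha} \mathcal M$ and a common fundamental domain $p$ (so $\sum_\gamma \sigma_\gamma(p) = 1 = \sum_\lambda \alpha_\lambda(p)$ with orthogonality). I would set $\cH = L^2(\mathcal M, \Tr)$ and let $\xi = \hat p$. The idea is to build the $Q$ from the relative commutant of the $\Gamma$-action; more precisely, the cut-down $p\mathcal M p$ (or a suitable piece of it) carries the structure needed to produce $Q$, and the Koopman representations $\sigma^0$ and $\alpha^0$ give the left $L\Gamma$-action and right $L\Lambda$-action. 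Since $p$ is a common fundamental domain with $\Tr(p)$ equal on both sides (index $1$), I normalize $\Tr$ so $\Tr(p)=1$, making $\hat p$ a tracial vector for both algebras. The commutativity of the two actions ensures the left $L\Gamma$- and right $L\Lambda$-actions commute, and the fundamental-domain condition $\sum_\gamma \sigma_\gamma(p)=1$ is exactly what forces the cyclicity $\overline{\Span((L\Gamma \ovt Q)\xi)} = \cH$. The bi-traciality conditions in (\ref{item:vNOE of algebras})(a) follow from $\Tr$-preservation of the actions together with the orthogonality of the fundamental-domain translates.

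For the converse, suppose $L\Gamma \sim_{\mathrm{vNOE}} L\Lambda$, witnessed by $(Q, \tau_Q)$ and a bimodule $(\cH, \xi)$ as in (\ref{item:vNOE of algebras}). I would construct a von Neumann coupling by taking $\mathcal M = {_{L\Lambda}\cB(\cH)} = (L\Lambda)' \cap \cB(\cH)$ (the commutant of the right $L\Lambda$-action), which by Proposition \ref{prop: characterizing trace on commutant} is a semi-finite von Neumann algebra with a canonical trace $\Tr$. The left $L\Gamma \ovt Q$-action lands inside $\mathcal M$, giving an action of $\Gamma$ (via the unitaries implementing $L\Gamma$) and, via the right-module structure of $\cH$ over $L\Lambda$ realized as left $(L\Lambda)^{\mathrm{op}}$-action, an action of $\Lambda$; the two commute because left and right actions on a bimodule commute. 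The common fundamental domain should be the projection $P$ onto $\overline{\Span(Q\xi)}$ (or onto the cyclic subspace generated by $\xi$ appropriately), and Remark \ref{rem: observation about trace on commutant} gives $\Tr(P)=1$, so the index is $1$. The cyclicity conditions $\overline{\Span((A\ovt Q)\xi)} = \cH = \overline{\Span(Q\xi B)}$ translate into $\sum_\gamma \sigma_\gamma(P) = 1$ and $\sum_\lambda \alpha_\lambda(P) = 1$ respectively.

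The main obstacle I anticipate is correctly identifying the auxiliary algebra $Q$ in the forward direction and verifying that the translates of the fundamental domain give genuine orthogonality compatible with the tensor decomposition $L\Gamma \ovt Q$. In the classical (abelian) case $Q$ would be trivial and $\mathcal M = L^\infty(\Omega)$; here the non-commutativity means $Q$ captures the ``stabilizer'' or ``transverse'' data, and pinning down that $\mathcal M$ (or an appropriate corner) factors as $L\Gamma \ovt Q$ with $\hat p$ bi-tracial is the delicate point. I would handle this by carefully using the Koopman unitaries and the fundamental-domain partition of unity to build an explicit unitary $U\colon \cH \to L^2(L\Gamma \ovt Q)$ intertwining the structures, thereby matching the two definitions; checking that this $U$ respects all four conditions (bi-traciality and the two cyclicity statements) is where the bulk of the verification lies, but each individual check reduces to the trace-preservation and orthogonality properties already in hand.
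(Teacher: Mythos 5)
Both halves of your outline run into concrete trouble, and in each case the fix is a different choice of object than the one you propose. In the direction $\Gamma\sim_{\mathrm{vNOE}}\Lambda\Rightarrow L\Gamma\sim_{\mathrm{vNOE}}L\Lambda$, taking $\cH=L^2(\cM,\Tr)$ with $\xi=\hat p$ and extracting $Q$ from $p\cM p$ does not work: there is no natural right $L\Lambda$-module structure on $L^2(\cM,\Tr)$ that commutes with the left action you need. The candidate right action built from the $\Lambda$-fundamental domain decomposition $L^2(\cM,\Tr)\cong\ell^2\Lambda\ovt L^2(\cM^\Lambda)$ (i.e.\ conjugating $\rho_\Lambda(t)\otimes 1$ back) commutes with left multiplication by $z\in\cM$ only when $\alpha_t(z)=z$, i.e.\ only for $z\in\cM^\Lambda$, not for $z\in\cM^\Gamma$ or for the candidate $Q$. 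Moreover $Q$ must be large enough that $\overline{\Span(Q\xi L\Lambda)}=\cH$; the fixed points alone are not enough. The paper's construction resolves both points at once by enlarging the Hilbert space to $\cH=L^2(\cM,\Tr)\ovt\ell^2\Lambda$ and taking $Q=\cM^\Gamma\rtimes\Lambda$ (not anything built from $p\cM p$), with $\xi=p\otimes\delta_e$: the right $L\Lambda$-action is $\id\otimes\rho_\Lambda(s^{-1})$ on the new tensor leg, which manifestly commutes with the left structure, and the crossed-product copy of $\Lambda$ inside $Q$, acting by $\alpha_s^0\otimes\lambda_\Lambda(s)$, is exactly what makes $Q\xi L\Lambda$ total. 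Identifying this $Q$ is the genuinely new idea your sketch is missing.

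In the converse direction your choice $\cM=(L\Lambda)'\cap\cB(\cH)$ is the wrong commutant. The right $L\Lambda$-unitaries $v_s$ lie in the algebra you are cutting out, so conjugation by them acts \emph{trivially} on $(L\Lambda)'\cap\cB(\cH)$, and a trivial action of an infinite group admits no fundamental domain; worse, your proposed fundamental domain, the projection $P$ onto $\overline{\Span(Q\xi)}$, does not even lie in $(L\Lambda)'$, since if $\overline{\Span(Q\xi)}$ were right-$L\Lambda$-invariant it would contain $\overline{\Span(Q\xi L\Lambda)}=\cH$. The correct choice, as in the paper, is $\cM=Q'\cap\cB(\cH)$ with the trace from Proposition \ref{prop: characterizing trace on commutant}: both $\Ad(u_\gamma)$ and $\Ad(v_s^*)$ preserve $Q'$ and act nontrivially there, $P\in Q'$ with $\Tr(P)=1$ by Remark \ref{rem: observation about trace on commutant}, and the two cyclicity conditions translate into $\sum_\gamma\sigma_\gamma(P)=1$ and $\sum_s\alpha_s(P)=1$ essentially as you predicted. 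So the overall translation scheme you describe is the right one, but as written both constructions fail at the point of producing commuting actions with a common fundamental domain.
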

\begin{proof}
First suppose that $L\Gamma\sim_{\mathrm{vNOE}}L\Lambda$, and let $(\cH,Q,\xi)$ be a triple as in condition (\ref{item:vNOE of algebras}) of Theorem \ref{thm: rephrase vNOE via homoms}. Set $A=L\Gamma$ and $B=L\Lambda$, and consider $\cM=Q'\cap \cB(\cH)={_Q\cB(\cH)}$. For $\gamma\in \Gamma$, let $u_\gamma\in L\Gamma$ be the corresponding unitary and for $T\in\cB(\cH)$, define $\sigma_\gamma(T)=u_\gamma T u_\gamma^*$. Since $L\Gamma$- and $Q$-actions on $\cH$ commute, it follows that $\cM$ is invariant under $\sigma_\gamma$ for each $\gamma\in\Gamma$, and thus we have an action $\Gamma\actson^\sigma\cM$. Similarly, since $\cH$ is a $Q-B$-bimodule, we have an action $\Lambda\actson^\alpha\cM$ given by $\alpha_s(T)=v_s^*Tv_s, s\in\Lambda, T\in \cM$, where $v_s\in L\Lambda$ is the unitary corresponding to $s\in\Lambda$. It is clear that the actions $\Gamma\actson^\sigma\cM$ and $\Lambda\actson^\alpha\cM$ commute. Let $\Tr$ be the canonical trace on $\cM$ given by Proposition \ref{prop: characterizing trace on commutant}. Since $L\Gamma$- and $Q$-actions on $\cH$ commute, we have that for every $\gamma\in\Gamma$, $u_\gamma$ is a $Q$-linear operator on $\cH$ and hence belongs to $\cM$. Therefore, it now follows from the tracial property that $\Tr(\sigma_\gamma(T))=\Tr(u_\gamma Tu_\gamma^*)=\Tr(T)$ for all $\gamma\in\Gamma, T\in\cM$ and hence $\Gamma\actson^\sigma\cM$ is trace-preserving. Similarly, $\Lambda\actson^\alpha\cM$ too is trace-preserving. 

Let $P\in\cB(\cH)$ be the orthogonal projection from $\cH$ onto $\overline{\Span(Q\xi)}$. It is straightforward to see that $P$ is $Q$-linear and thus, $P\in\cM$. Moreover, it follows from Remark \ref{rem: observation about trace on commutant} that $\Tr(P)=1$. Therefore, it only remains to show that $P$ is a fundamental domain for both $\Gamma$- and $\Lambda$-actions. To see that $P$ is a $\Gamma$-fundamental domain, we first note that, since $\xi$ is tracial for the $A\ovt Q$-module structure, we have, for $a\in A, x\in Q$, that 
    \begin{align*}
        P((a\otimes x)\xi)=\tau_A(a)x\xi.
    \end{align*}

    
    Furthermore, if $a=\sum_{\gamma\in\Gamma}a_\gamma u_\gamma$ is the Fourier series expansion of $a\in A$, then we recall that $\tau_A(a)=a_e$, and thus $P((a\otimes x)\xi)=a_e x\xi$. Therefore,
    \begin{align*}
        \sigma_\gamma(P)((a\otimes x)\xi)&=u_\gamma Pu_\gamma^*((a\otimes x)\xi)\\
        &=u_\gamma P\left(\left(\sum_{g\in\Gamma}a_gu_{\gamma^{-1}g}\otimes x\right)\xi\right)\\
        &=a_\gamma(u_\gamma\otimes x)\xi.
    \end{align*}
    If we let $P_\gamma$ be the orthogonal projection from $\cH$ onto $u_\gamma(\overline{\Span(Q\xi)})$, then it follows from the above calculation that $\sigma_\gamma(P)=P_\gamma$, and it is straightforward to check that the projections $\{P_\gamma\}_{\gamma\in \Gamma}$ are pairwise orthogonal. Moreover, since $\cH=\overline{\Span(A\ovt Q)\xi}$, we also get that $\sum_{\gamma\in\Gamma}\sigma_\gamma(P)=1$ and hence $P$ is a $\Gamma$-fundamental domain. Since we also have that $\xi$ is bi-tracial for the $Q-B$-bimodule structure, we observe that, for $b\in B, x\in Q$,
    \begin{align*}
        P(x\xi b)=\tau_B(b)x\xi.
    \end{align*}
    If $b=\sum_{t\in\Lambda}b_tv_t$ is the Fourier series expansion of $b\in B$, then $\tau_B(b)=b_e$, and hence $P(x\xi b)=b_ex\xi$. For $s\in\Lambda$, let $P_s$ be the orthogonal projection from $\cH$ onto $(\overline{\Span(Q\xi)})v_s$. Now the following calculation shows that $\alpha_s(P)=P_s$, so, $\{\alpha_s(P)\}_{s\in\Lambda}$ are pairwise orthogonal, and thus $P$ is a $\Lambda$-fundamental domain since $\cH=\overline{\Span(Q\xi B)}$.
    \begin{align*}
        \alpha_s(P)(x\xi b)&=v_s^*Pv_s\left(x\xi\sum_{t\in\Lambda}b_tv_t\right)\\
        &=v_s^*(b_{s^{-1}}x\xi)\\
        &=b_{s^{-1}}x\xi v_{s^{-1}}.
    \end{align*}

Conversely, suppose $\Gamma\sim_{\mathrm{vNOE}}\Lambda$, and let $(\cM,\Tr)$ be a von Neumann coupling between $\Gamma$ and $\Lambda$ with common fundamental domain $p\in\cM$ for both $\Gamma\actson^\sigma\cM$ and $\Lambda\actson^\alpha\cM$. Let $ A=L\Gamma, B=L\Lambda, \cH=L^2(\cM,\Tr)\ovt \ell^2(\Lambda), Q=\cM^{\Gamma}\rtimes \Lambda,$ and $\xi=p\otimes \delta_e$. Let $\tau$ be the trace on $\cM^\Gamma$, which we recall is given by $\tau(x)=\Tr(pxp)$ (see \cite[Proposition 4.2]{IPR}). Consider the action of $L\Gamma$ on $\cH$ given by 
$$u_\gamma\eta=(\sigma_\gamma^0\otimes\id)\eta, \qquad \gamma\in\Gamma,\eta\in\cH,$$
where $\sigma_\gamma^0$ is the Koopman representation of $\Gamma$ into $\cU(L^2(\cM,\Tr))$. The action of $\cM^\Gamma$ on $\cH$ is given by 
$$x\eta=(x\otimes\id)\eta,\qquad x\in\cM^\Gamma, \eta\in\cH,$$
and $\Lambda$ acts on $\cH$ on the left by 
$$v_s\eta=(\alpha_s^0\otimes\lambda_\Lambda(s))\eta, \qquad s\in\Lambda,\eta\in\cH,$$
where $\lambda_\Lambda:\Lambda\to\cU(\ell^2\Lambda)$ is the left regular representation, and $\alpha_s^0:\Lambda\to\cU(L^2(\cM,\Tr))$ is the Koopman representation implementing the $\Lambda$-action. Since the $\Gamma$- and $\Lambda$-actions on $\cM$ commute, the actions defined above make $\cH$ into a left $L\Gamma\ovt(\cM^\Gamma\rtimes\Lambda)$-module. Furthermore, for $g\in\Gamma, s\in\Lambda,$ and $x\in\cM^\Gamma$, we have 
    \begin{align*}
    \langle (u_g\otimes xv_s)\xi, \xi\rangle&=\langle (u_g\otimes xv_s)(p\otimes \delta_e), p\otimes \delta_e\rangle\\
    &=\langle \sigma_g(x\alpha_s(p))\otimes\delta_{s},p\otimes \delta_e\rangle\\
    &=\delta_{s,e}\Tr(x\sigma_g(p)p)\\
    &=\delta_{s,e}\delta_{g,e}\Tr(pxp)\\
    &=\delta_{s,e}\delta_{g,e}\tau(x)\\
    &=\tau_A(u_g)\tau_Q(xv_s).
    \end{align*}
    Thus, it follows that $\xi$ is tracial for the left $L\Gamma\ovt(\cM^\Gamma\rtimes\Lambda)$-module structure. For a fixed $s\in\Lambda$, we have
     \begin{align*}
     \overline{\Span\{ (u_g\otimes xv_s)\xi :g\in\Gamma, x\in\cM^\Gamma\}}&=\overline{\Span\{ \sigma_g(x\alpha_s(p))\otimes\delta_{s}:g\in\Gamma, x\in\cM^\Gamma\}}\\
     &=\overline{\Span\{ x\alpha_s(\sigma_g(p))\otimes \delta_s:g\in\Gamma, x\in\cM^\Gamma\}}\\
     &=\overline{\Span\{ \alpha_s(\alpha_{s^{-1}}(x)\sigma_g(p))\otimes \delta_s:g\in\Gamma, x\in\cM^\Gamma\}}\\
     &=\overline{\Span\{ \alpha_s(y\sigma_g(p))\otimes \delta_s: g\in\Gamma, y\in \cM^\Gamma\}}\\
     &=(\alpha_s^0\otimes\lambda_\Lambda(s))(\overline{\Span\{y\sigma_g(p)\otimes\delta_e:g\in\Gamma,y\in\cM^\Gamma\}})\\
     &=L^2(\cM,\Tr)\ovt\mathbb C\delta_s,
     \end{align*}
    where the last equality follows from the fact that $\overline{\Span \{x\sigma_g(p): g\in\Gamma, x \in \cM^{\Gamma}\}}=L^2(\cM,\Tr)$ \cite[Proposition 4.2]{IPR}. Therefore, we have
    \begin{align*}
        \overline{\Span((A\ovt Q)\xi)}&=\overline{\Span\{ (u_g\otimes xv_s)\xi :g\in\Gamma, x\in\cM^\Gamma, s\in\Lambda\}}\\
        &=\overline{\Span(L^2(\cM,\Tr)\ovt\mathbb C\delta_s:s\in\Lambda)}=\cH
    \end{align*}

Finally, the right action of $L\Lambda$ on $\cH$ given by 
\begin{align*}
    \eta v_s=(\id\otimes\rho_\Lambda(s^{-1}))\eta,\qquad s\in\Lambda, \eta\in\cH,
\end{align*}
where $\rho_\Lambda:\Lambda\to\cU(\ell^2\Lambda)$ is the right regular representation, makes $\cH$ into a $Q-L\Lambda$-bimodule. For $x\in\cM^\Gamma$, and $s,t\in\Lambda$, we have
\begin{align*}
    \langle xv_s(p\otimes \delta_e)v_t,p\otimes\delta_e\rangle=\langle x\alpha_s(p)\otimes\delta_{st},p\otimes\delta_e\rangle=\delta_{s,e}\delta_{t,e}\Tr(pxp)=\tau_Q(xv_s)\tau_B(v_t),
\end{align*}
and hence, $\xi$ is a tracial vector for the $Q-B$-bimodule structure. We recall from the proof of \cite[Proposition 4.2]{IPR}, that, since $p$ is $\Lambda$-fundamental domain, we have a direct sum decomposition $L^2(\cM,\Tr)=\sum_{s\in\Lambda}L^2(\cM,\Tr)\alpha_s(p)$. Thus, to show that $\overline{\Span(Q\xi B)}=\cH$, it suffices to show that, for $s\in\Lambda$, $\overline{\Span\{xv_s\xi v_t:x\in\cM^\Gamma,t\in\Lambda\}}=L^2(\cM,\Tr)\alpha_s(p)\ovt\ell^2\Lambda$. To this end, fix $s\in\Lambda$ and note that
\begin{align*}
    \overline{\Span\{xv_s(p\otimes\delta_e)v_t:x\in\cM^\Gamma,t\in\Lambda\}}&=\overline{\Span\{\alpha_s(\alpha_{s^{-1}}(x)p)\otimes\delta_{st}: x\in\cM^\Gamma,t\in\Lambda\}}\\
    &=(\alpha_s^0\otimes\lambda_\Lambda(s))(\overline{\Span\{yp\otimes\delta_t:y\in\cM^\Gamma,t\in\Lambda\}})\\
    &=(\alpha_s^0\otimes\lambda_\Lambda(s))(L^2(\cM,\Tr)p\ovt\ell^2\Lambda)\\
    &=L^2(\cM,\Tr)\alpha_s(p)\ovt\ell^2\Lambda
\end{align*}
\end{proof}

\subsection{Relationship to von Neumann equivalence} \begin{defn}[\cite{IPR}]\label{defn:funddomvn}
Let $A$ and $B$ be tracial von Neumann algebras and let $\cM$ be a semi-finite von Neumann algebra such that $A\subset\cM$ and $B^{\mathrm{op}}\subset \cM$.
\begin{enumerate}
\item  A \textit{fundamental domain for $A$ inside of $\mathcal M$} consists of a realization of  the standard representation $A \subset \mathcal B(L^2(A))$ as an intermediate von Neumann subalgebra $A \subset \mathcal B(L^2(A)) \subset \mathcal M$. The fundamental domain is {\it finite} if finite-rank projections in $\mathcal B(L^2(A))$ are mapped to finite projections in $\mathcal M$. 

\item $\cM$ is a {\it von Neumann coupling between $A$ and $B$} if $B^{\rm op} \subset A' \cap \mathcal M$ and each inclusion $A \subset \mathcal M$ and $B^{\rm op} \subset \mathcal M$ has a finite fundamental domain. 
\end{enumerate}
\end{defn}

\begin{defn}[\cite{IPR}]
Two tracial von Neumann algebras $A$ and $B$ are {\it von Neumann equivalent}, denoted $A \sim_{\mathrm{vNE}} B$, if there exists a von Neumann coupling between them. 
\end{defn}

\begin{prop}\label{prop:vNOE of vNalg implies vNE of vNalg}
    Let $(A,\tau_A)$ and $(B,\tau_B)$ be tracial von Neumann algebras. If $A\sim_{\mathrm{vNOE}} B$, then $A\sim_{\mathrm{vNE}}B$.
\end{prop}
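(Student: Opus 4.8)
The plan is to manufacture a von Neumann coupling in the sense of Definition~\ref{defn:funddomvn} directly from a vNOE-coupling $(\cH,Q,\xi)$ satisfying condition (\ref{item:vNOE of algebras}) of Theorem~\ref{thm: rephrase vNOE via homoms}. I would set
\[
\cM = Q'\cap\cB(\cH) = {}_Q\cB(\cH),
\]
which by Proposition~\ref{prop: characterizing trace on commutant} is a semi-finite von Neumann algebra carrying a canonical faithful normal semi-finite trace. Since $\cH$ is a left $A\ovt Q$-module, the left action of $A=A\otimes 1$ commutes with that of $Q=1\otimes Q$, giving a normal unital embedding $A\hookrightarrow\cM$; since $\cH$ is an $A\ovt Q$--$B$-bimodule, the right $B$-action commutes with the left $Q$-action, giving an embedding $B^{\mathrm{op}}\hookrightarrow\cM$, and because the right $B$-action also commutes with the left $A$-action we obtain $B^{\mathrm{op}}\subset A'\cap\cM$, exactly as Definition~\ref{defn:funddomvn}(2) demands. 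It then remains to exhibit finite fundamental domains for the two inclusions.

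For the $A$-side I would use that $\xi$ is cyclic and tracial for the left $A\ovt Q$-action, so $y\xi\mapsto\hat y$ extends to a unitary $U\colon\cH\to L^2(A\ovt Q)=L^2(A)\ovt L^2(Q)$ intertwining the left $A\ovt Q$-action with left multiplication. Under $U$ the subalgebra $1\otimes Q$ becomes $1_{L^2(A)}\otimes L_Q$, so the commutation theorem gives
\[
\cM\;\cong\;(1_{L^2(A)}\otimes L_Q)'\;=\;\cB(L^2(A))\,\ovt\,JQJ .
\]
Then $\cB(L^2(A))\otimes 1$ is an intermediate von Neumann subalgebra $A\subset\cB(L^2(A))\otimes 1\subset\cM$ inside which $A=L_A\otimes 1$ sits precisely via its standard representation. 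A finite-rank projection $e\in\cB(L^2(A))$ maps to $e\otimes 1$, whose corner $(e\otimes 1)\cM(e\otimes 1)\cong e\cB(L^2(A))e\,\ovt\,JQJ$ is a finite von Neumann algebra (as $JQJ\cong Q^{\mathrm{op}}$ is finite); hence $e\otimes 1$ is a finite projection and this fundamental domain is finite.

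The $B$-side is symmetric: condition (\ref{item:vNOE of algebras}) says $\xi$ is cyclic and tracial for the commuting left $Q$- and right $B$-actions, i.e.\ for the left $Q\ovt B^{\mathrm{op}}$-action, so there is a unitary $W\colon\cH\to L^2(Q\ovt B^{\mathrm{op}})=L^2(Q)\ovt L^2(B^{\mathrm{op}})$ under which the left $Q$-action becomes $L_Q\otimes 1$ and the right $B$-action becomes $1\otimes L_{B^{\mathrm{op}}}$. The same computation yields $\cM\cong JQJ\,\ovt\,\cB(L^2(B^{\mathrm{op}}))$, inside which $1\otimes\cB(L^2(B^{\mathrm{op}}))$ realizes $B^{\mathrm{op}}\subset\cB(L^2(B^{\mathrm{op}}))$ as the standard representation, and finite-rank projections again land in finite corners. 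Thus both inclusions admit finite fundamental domains, $\cM$ is a von Neumann coupling between $A$ and $B$, and $A\sim_{\mathrm{vNE}}B$.

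The step I expect to need the most care is reconciling the two pictures of the single algebra $\cM=Q'\cap\cB(\cH)$ obtained through the different unitaries $U$ and $W$: one must be certain that the two type-I intermediate subalgebras $\cB(L^2(A))$ and $\cB(L^2(B^{\mathrm{op}}))$ genuinely live inside the \emph{same} $\cM$, and that finiteness of the relevant projections---an intrinsic, trace-independent property that I would verify through the corner computations above rather than through the explicit trace of Proposition~\ref{prop: characterizing trace on commutant}---is exactly what Definition~\ref{defn:funddomvn} requires. The commutation-theorem identifications of $\cM$ and the cyclic-tracial-vector normal form producing $U$ and $W$ are the technical heart of the argument and should be stated carefully.
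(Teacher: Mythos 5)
Your argument is correct and follows essentially the same route as the paper: take $\cM=Q'\cap\cB(\cH)$, identify it as $\cB(L^2(A))\ovt Q^{\mathrm{op}}$ via the cyclic tracial vector for the left $A\ovt Q$-action, and then rerun the identification from the $Q$--$B$ (equivalently $A$--$B\ovt Q^{\mathrm{op}}$) side to exhibit the finite fundamental domain for $B^{\mathrm{op}}$, using that the left $Q$-action and the right $Q^{\mathrm{op}}$-action are the same operators so the two decompositions describe one and the same $\cM$. The only difference is that you spell out the finiteness of the fundamental domains via the corner computation, which the paper treats as clear.
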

\begin{proof}
Suppose $(\cH,Q,\xi)$ is a triple as in condition (\ref{item:vNOE of algebras}) of Theorem \ref{thm: rephrase vNOE via homoms}. As in the proof of (\ref{item:vNOE of algebras}) implies (\ref{item: condition wrt a homom}) in Theorem \ref{thm: rephrase vNOE via homoms}, let $U:\cH\to L^2(A\ovt Q)$ be the unitary such that $U(x\xi)=\hat x$ for all $x\in A\ovt Q$, and let $\phi:B\to A\ovt Q$ be the $*$-homomorphism obtained therein. Let $\cM=Q'\cap \cB(\cH)=\cB(L^2(A))\ovt Q^{\mathrm{op}}$. We will show that $\cM$ is a von Neumann coupling between $A$ and $B$. It is clear that the inclusion $A\subset \cM$ has a finite fundamental domain. We recall that the argument used in defining $\phi$, shows that we have an inclusion $B^{\mathrm{op}}\subset\cM$ and moreover, since the left $A$- and right $B$-actions on $\cH$ commute, we have that $B^{\mathrm{op}}\subset A'\cap \cM$. Thus, it only remains to show that the inclusion $B^{\mathrm{op}}\subset \cM$ has a finite fundamental domain. To this end, note that, we can also view $\cH$ as an $A-B\ovt Q^{\mathrm{op}}$-bimodule, and $\xi$ is tracial and cyclic for the right $B\ovt Q^{\mathrm{op}}$-module structure. Thus, by the same construction as above, we get an inclusion $B^{\mathrm{op}}\subset {Q^{\mathrm{op}}}'\cap \cB(\cH)=\cB(L^2(B))\ovt Q$. Since $\cM=Q'\cap \cB(\cH)={Q^{\mathrm{op}}}'\cap \cB(\cH)$, we get that the inclusion $B^{\mathrm{op}}\subset\cM$ admits a finite fundamental domain and hence $A\sim_{\mathrm{vNE}}B$.
\end{proof}




  \section{Towards an analogue of Singer's Theorem}

  The main goal of this section is to prove the following theorem.

\begin{thm}
\label{thm: analogue of Singer}
If $\Gamma$ and $\Lambda$ are countable discrete groups such that $\Gamma\sim_{\mathrm{vNOE}}\Lambda$, then there exist tracial von Neumann algebras $(A,\tau_A), (B,\tau_B)$, trace-preserving actions $\Gamma\actson A, \Lambda\actson B$, and a trace-preserving isomorphism $\theta:B\rtimes\Lambda\to A\rtimes\Gamma$.
\end{thm}

Before proving the theorem, we first set up some notations and recall a few facts. Let $\Gamma$ be a countable discrete group and $(M,\tau)$ be a tracial von Neumann algebra. A \emph{1-cocycle} for a trace-preserving action $\Gamma\actson^\alpha(M,\tau)$ is a map $w:\Gamma\to\cU(M)$ that satisfies the following cocycle identity:
\begin{align*}
    w_s\alpha_s(w_t)=w_{st}, \quad s,t\in\Gamma.
\end{align*}
If $\Gamma\actson^\beta(M,\tau)$ is another trace-preserving action, then we say that $\alpha$ and $\beta$ are \emph{cocycle conjugate} if there exists an automorphism $\theta\in\Aut(M,\tau)$ and a 1-cocycle $w:\Gamma\to\cU(M)$ for $\alpha$ such that
\begin{align}
\label{cocycle conjugacy of actions}
    \theta\circ\beta_s\circ\theta^{-1}=\Ad(w_s)\circ\alpha_s,\quad s\in\Gamma.
\end{align}
We recall that if $\Gamma\actson^\alpha(M,\tau)$ and $\Gamma\actson^\beta(M,\tau)$ are cocycle conjugate, then $M\rtimes_\alpha\Gamma\cong M\rtimes_\beta\Gamma$. Indeed, let $\theta\in\Aut(M,\tau)$ and $w:\Gamma\to\cU(M)$ be as in \eqref{cocycle conjugacy of actions}, and consider the map $\Theta:M\rtimes_\alpha\Gamma\to M\rtimes_\beta\Gamma$ given by
\begin{align*}
    \Theta(xu_s)=\Ad(w_s)(\theta(x))v_s, \quad x\in M, s\in\Gamma,
\end{align*}
where, for $s\in\Gamma$, $u_s,v_s$ represent the canonical group unitaries in $M\rtimes_\alpha\Gamma,M\rtimes_\beta\Gamma$, respectively. It is then straightforward to verify that $\Theta$ is an isomorphism.\\

\begin{proof}[Proof of Theorem \ref{thm: analogue of Singer}]
Let $\Gamma\actson^\sigma\cM$ and $\Lambda\actson^\alpha\cM$ be commuting, trace-preserving actions of countable discrete groups $\Gamma$ and $\Lambda$ on a semi-finite von Neumann algebra $\cM$ with a faithful normal semi-finite trace $\Tr$. Let $p\in\cM$ be a finite-trace projection which is a common fundamental domain for both $\Gamma$- and $\Lambda$-actions, that is, $\{\sigma_\gamma(p)\}_{\gamma\in\Gamma}$ are mutually orthogonal and $\sum_{\gamma\in\Gamma}\sigma_\gamma(p)=1$ (SOT), and similarly, $\{\alpha_\lambda(p)\}_{\lambda\in\Lambda}$ are mutually orthogonal and $\sum_{\lambda\in\Lambda}\alpha_\lambda(p)=1$ (SOT). From \cite[Propostion 4.2]{IPR}, there exists a unitary $\cF_p:\ell^2\Gamma\ovt L^2(\cM^\Gamma,\tau)\to L^2(\cM,\Tr)$ such that 
\begin{align*}
\cF_p(\delta_\gamma\otimes x)=\sigma_{\gamma^{-1}}(p)x\qquad \text{for all } \gamma\in\Gamma, x\in\cM^\Gamma,
\end{align*}
where the trace $\tau$ on $\cM^\Gamma$ is given by $\tau(x)=\Tr(pxp),~x\in\cM^\Gamma$\footnote[2]{Note that the formula for $\cF_p$ in \cite[Proposition 4.2]{IPR} requires $x\in\mathfrak{n}_\tau\subset\cM^\Gamma$, where $\tau$ is the semi-finite trace on $\cM^\Gamma$ given by $\tau(x)=\Tr(pxp)$. However, when $\Tr(p)<\infty$, $\tau$ is a finite trace and thus we can have $x\in\cM^\Gamma$ in the formula for $\cF_p$.}. Furthermore, from \cite[Proposition 4.3]{IPR}, there is a trace-preserving isomorphism $\dpg:\cM\rtimes\Gamma\to\cB(\ell^2\Gamma)\ovt\cM^\Gamma$ such that for $\gamma\in\Gamma$ and $x\in \cM$,
\begin{align*}
    \dpg(u_\gamma)=\rho_\gamma\otimes 1,\qquad \dpg(x)=\cF_p^*x\cF_p.
\end{align*}
If we view $\cB(\ell^2\Gamma)\ovt\cM^\Gamma$ as $\cM^\Gamma$-valued $\Gamma\times\Gamma$ matrices, then we have that for all $x\in\cM$, $\dpg(x)=[x_{s,t}]_{s,t}$, where
\begin{align*}
    x_{s,t}=\sum_{\gamma\in\Gamma}\sigma_\gamma(\sigma_{t^{-1}}(p)x\sigma_{s^{-1}}(p))\in\cM^\Gamma.
\end{align*}

Since the actions of $\Gamma$ and $\Lambda$ on $\cM$ commute, we get a well-defined action of $\Lambda$ on $\cM\rtimes\Gamma$, which we denote by $\alpha\rtimes\id_\Gamma$, and it is given by 
\begin{align*}
(\alpha_\lambda\rtimes\id_\Gamma)(xu_\gamma)=\alpha_\lambda(x)u_\gamma, \quad\lambda\in\Lambda,\gamma\in\Gamma,x\in\cM.
\end{align*}
Further, let $\id\otimes\alpha$ be the action of $\Lambda$ on $\cB(\ell^2\Gamma)\ovt\cM^\Gamma$ given by
\begin{align*}
    (\id\otimes\alpha_\lambda)(T\otimes x)=T\otimes\alpha_\lambda(x), \quad \lambda\in\Lambda,T\in\cB(\ell^2\Gamma), x\in\cM^\Gamma.
\end{align*}
Define an action $\talpha$ of $\Lambda$ on $\cB(\ell^2\Gamma)\ovt\cM^\Gamma$ by
\begin{align*}
    \talpha_\lambda=\dpg\circ(\alpha_\lambda\rtimes\id_\Gamma)\circ(\dpg)^{-1},~\lambda\in\Lambda.
\end{align*}
By definition, $\talpha$ is conjugate to $\alpha\rtimes\id_\Gamma$, and hence we get an isomorphism of the crossed products
\begin{align*}
\cM\rtimes(\Gamma\times\Lambda)=(\cM\rtimes\Gamma)\rtimes_{\alpha\rtimes\id_\Gamma}\Lambda\underset{\cong}{\overset{\dpg\rtimes\id_\Gamma}{\longrightarrow}}(\cB(\ell^2\Gamma)\ovt\cM^\Gamma)\rtimes_{\talpha}\Lambda.
\end{align*}
Now  we note that for any $\lambda\in\Lambda$, $\alpha_{\lambda^{-1}}(p)$ is again a $\Gamma$-fundamental domain and hence we can consider the map $\Delta_{\alpha_{\lambda^{-1}}(p)}^\Gamma:\cM\rtimes\Gamma\to\cB(\ell^2)\otimes\cM^\Gamma$ defined analogously to $\dpg$. For any $x\in\cM$ and $s,t\in\Gamma$, by a slight abuse of notation, let $[\dpg(x)]_{s,t}$ be the $(s,t)$-entry in the matrix representation of $\dpg(x)$. Similarly, consider $[\Delta_{\alpha_{\lambda^{-1}}(p)}^\Gamma(x)]_{s,t}$. We observe that
\begin{align*}
  \alpha_\lambda([\Delta_{\alpha_{\lambda^{-1}}(p)}^\Gamma(x)]_{s,t})&=\alpha_\lambda\left(\sum_{\gamma\in\Gamma}\sigma_\gamma(\sigma_{t^{-1}}(\alpha_{\lambda^{-1}}(p))x\sigma_{s^{-1}}(\alpha_{\lambda^{-1}}(p)))\right)\\
  &=\sum_{\gamma\in\Gamma}\sigma_\gamma(\sigma_{t^{-1}}(p)\alpha_\lambda(x)\sigma_{s^{-1}}(p))\\
  &=[\dpg(\alpha_\lambda(x))]_{s,t},
\end{align*}
whence it follows that $\dpg(\alpha_\lambda(x))=(\id\otimes\alpha_\lambda)(\Delta_{\alpha_{\lambda^{-1}}(p)}^\Gamma(x))$ for all $x\in\cM$ and  $\lambda\in\Lambda$. Therefore, for any $x\in \cM, \gamma\in\Gamma$, and $\lambda\in\Lambda$, we have
\begin{align*}
    (\dpg\circ(\alpha_\lambda\rtimes\id_\Gamma))(xu_\gamma)&=\dpg(\alpha_\lambda(x)u_\gamma)\\
    &=\dpg(\alpha_\lambda(x))\dpg(u_\gamma)\\
    &=(\id\otimes\alpha_\lambda)(\Delta_{\alpha_{\lambda^{-1}}(p)}^\Gamma(x))(\rho_\gamma\otimes 1)\\
    &=(\id\otimes\alpha_\lambda)(\cF_{\alpha_{\lambda^{-1}}(p)}^*\cF_p\cF_p^*x\cF_p\cF_p^*\cF_{\alpha_{\lambda^{-1}}(p)})(\rho_\gamma\otimes 1)\\
    &=(\id\otimes\alpha_\lambda)(v_\lambda\dpg(x)v_\lambda^*)(\rho_\gamma\otimes 1)\\
    &=(\id\otimes\alpha_\lambda)(v_\lambda\dpg(xu_\gamma)v_\lambda^*),\\
\end{align*}
where $v_\lambda=\cF^*_{\alpha_{\lambda^{-1}}(p)}\cF_p$. The last equality follows from the fact that $v_\lambda\in\cU(L\Gamma\ovt\cM^\Gamma)$ (see \cite[Proposition 4.4]{IPR}), and hence commutes with $\rho_\gamma\otimes 1$. Therefore, if we let $w_\lambda=(\id\otimes\alpha_\lambda)(v_\lambda)\in\cU(L\Gamma\ovt\cM^\Gamma)$, then we have that
\begin{align*}
    \talpha_\lambda=\dpg\circ(\alpha_\lambda\rtimes\id_\Gamma)\circ(\dpg)^{-1}=\Ad(w_\lambda)\circ(\id\otimes\alpha_\lambda).
\end{align*}

{\bf Claim.} The map $w:\Lambda\to\cU(L\Gamma\ovt\cM^\Gamma)$ defined by $w_\lambda=(\id\otimes\alpha_\lambda)(v_\lambda)$ is a 1-cocycle for $\Lambda\actson^{\id\otimes\alpha}\cB(\ell^2\Gamma)\ovt\cM^\Gamma.$

\begin{proof}[Proof of Claim]
First note that, for any $y\in\mathfrak{n}_{\Tr}\subset\cM$, it is straightforward to verify that
    \begin{align*}
        \cF_p^*(y)=\sum_{\gamma\in\Gamma}\delta_\gamma\otimes y_\gamma,
    \end{align*}
    where
    \begin{align*}
        y_\gamma=\sum_{b\in\Gamma}\sigma_{b\gamma^{-1}}(p)\sigma_b(y).
    \end{align*}
    Since $\Tr(p)<\infty$, so, for any $a\in\Gamma$ and $x\in\cM^\Gamma$, we have that $\Tr(\sigma_{a^{-1}}(p)x)<\infty$ and hence $\sigma_{a^{-1}}(p)x\in\mathfrak{n}_{\Tr}$. Therefore, for any $a\in\Gamma$ and $x\in\cM^\Gamma$, we have
    \begin{align*}
        \cF^*_{\alpha_{\lambda^{-1}}(p)}\cF_p(\delta_a\otimes x)&=\cF^*_{\alpha_{\lambda^{-1}}(p)}(\sigma_{a^{-1}}(p)x)\\
        &=\sum_{\gamma\in\Gamma}\delta_\gamma\otimes\left(\sum_{b\in\Gamma}\sigma_{b\gamma^{-1}}(\alpha_{\lambda^{-1}}(p))\sigma_b(\sigma_{a^{-1}}(p)x)\right)\\
        &=\sum_{\gamma\in\Gamma}\delta_\gamma\otimes\left(\sum_{b\in\Gamma}\sigma_{b\gamma^{-1}}(\alpha_{\lambda^{-1}}(p))\sigma_{ba^{-1}}(p)x\right)
    \end{align*}
    Thus, as an $\cM^\Gamma$-valued $\Gamma\times\Gamma$ matrix, we can write $v_\lambda=[[v_\lambda]_{s,t}]_{s,t},$ where
    \begin{align*}
        [v_\lambda]_{s,t}=\sum_{\gamma\in\Gamma}\sigma_{\gamma s^{-1}}(\alpha_{\lambda^{-1}}(p))\sigma_{\gamma t^{-1}}(p),
    \end{align*}
    and therefore, $w_\lambda$ can be written as an $\cM^\Gamma$-valued $\Gamma\times\Gamma$ matrix $w_\lambda=[[w_\lambda]_{s,t}]_{s,t}$, where
    \begin{align*}
        [w_\lambda]_{s,t}=\alpha_\lambda([v_\lambda]_{s,t})=\sum_{\gamma\in\Gamma}\sigma_{\gamma s^{-1}}(p)\sigma_{\gamma t^{-1}}(\alpha_\lambda(p)).
    \end{align*}
    Finally, the following calculation verifies the cocycle identity for $w$. For $\lambda_1,\lambda_2\in\Lambda$, and $s,t\in\Gamma$, we have
    \begin{align*}
        &~~\quad[w_{\lambda_1}(\id\otimes\alpha_{\lambda_1})(w_{\lambda_2})]_{s,t}\\
        &=\sum_{a\in\Gamma}[w_{\lambda_1}]_{s,a}[(\id\otimes\alpha_{\lambda_{1}})(w_{\lambda_2})]_{a,t}\\
        &=\sum_{a\in\Gamma}\left[\left(\sum_{\gamma\in\Gamma}\sigma_{\gamma s^{-1}}(p)\sigma_{\gamma a^{-1}}(\alpha_{\lambda_1}(p))\right)\left(\sum_{\gamma'\in\Gamma}\sigma_{\gamma' a^{-1}}(\alpha_{\lambda_{1}}(p))\sigma_{\gamma' t^{-1}}(\alpha_{\lambda_1\lambda_2}(p))\right)\right]\\
        &=\sum_{a\in\Gamma}\sum_{\gamma\in\Gamma}\sigma_{\gamma s^{-1}}(p)\sigma_{\gamma a^{-1}}(\alpha_{\lambda_1}(p))\sigma_{\gamma t^{-1}}(\alpha_{\lambda_1\lambda_2}(p))\\
        &=\sum_{\gamma\in\Gamma}\sum_{a\in\Gamma}\sigma_{\gamma s^{-1}}(p)\sigma_{\gamma a^{-1}}(\alpha_{\lambda_1}(p))\sigma_{\gamma t^{-1}}(\alpha_{\lambda_1\lambda_2}(p))\\
        &=\sum_{\gamma\in\Gamma}\sigma_{\gamma s^{-1}}(p)\sigma_{\gamma t^{-1}}(\alpha_{\lambda_1\lambda_2}(p))\\
        &=[w_{\lambda_1\lambda_2}]_{s,t}
    \end{align*}
\end{proof}

It now follows from the above claim that the actions $\talpha$ and $\id\otimes\alpha$ of $\Lambda$ on $\cB(\ell^2\Gamma)\ovt\cM^\Gamma$ are cocycle conjugate, and hence we get the following isomorphisms of the crossed products

\begin{align*}
    (\cB(\ell^2\Gamma)\ovt\cM^\Gamma)\rtimes_{\talpha}\Lambda\underset{\cong}{\overset{\Psi^\Gamma}{\longrightarrow}} (\cB(\ell^2\Gamma)\ovt\cM^\Gamma)\rtimes_{\id\otimes\alpha}\Lambda \underset{\cong}{\overset{\Psi^\alpha}{\longrightarrow}} \cB(\ell^2\Gamma)\ovt(\cM^\Gamma\rtimes_\alpha\Lambda),
    \end{align*}
where the isomorphism $\Psi^\alpha$ is the canonical isomorphism and satisfies
\begin{align*}
    \Psi^\alpha((T\otimes x)u_\lambda)=T\otimes xu_\lambda, \qquad T\in\cB(\ell^2\Gamma), x\in\cM^\Gamma, \lambda\in\Lambda.
\end{align*}

Similarly, starting with the isomorphism $\dpl:\cM\rtimes\Lambda\to\cB(\ell^2\Lambda)\ovt\cM^\Lambda$, and performing the above analysis, yields the following isomorphisms of the crossed products
\begin{align*}
\cM\rtimes(\Gamma\times\Lambda)=(\cM\rtimes\Lambda)\rtimes_{\sigma\rtimes\id_\Lambda}\Gamma\underset{\cong}{\overset{\dpl\rtimes\id_\Lambda}{\longrightarrow}}(\cB(\ell^2\Lambda)\ovt\cM^\Lambda)\rtimes_{\tilde{\sigma}}\Gamma,
\end{align*}
\begin{align*}
(\cB(\ell^2\Lambda)\ovt\cM^\Lambda)\rtimes_{\tilde{\sigma}}\Gamma\underset{\cong}{\overset{\Psi^\Lambda}{\longrightarrow}} (\cB(\ell^2\Lambda)\ovt\cM^\Lambda)\rtimes_{\id\otimes\sigma}\Gamma\underset{\cong}{\overset{\Psi^\sigma}{\longrightarrow}} \cB(\ell^2\Lambda)\ovt(\cM^\Lambda\rtimes_{\sigma}\Gamma).
\end{align*}
Thus, if $\Phi:\cB(\ell^2\Gamma)\ovt(\cM^\Gamma\rtimes_\alpha\Lambda)\to \cB(\ell^2\Lambda)\ovt(\cM^\Lambda\rtimes_\sigma\Gamma)$ is given by
\begin{align*}
    \Phi:=\Psi^\sigma\circ\Psi^\Lambda\circ\theta\circ(\Psi^\Gamma)^{-1}\circ(\Psi^\alpha)^{-1},
\end{align*}
where $\theta:=\dpl\rtimes\id_\Lambda\circ(\dpg\rtimes\id_\Gamma)^{-1}$, then $\Phi$ is an isomorphism and moreover, the following diagram commutes.

\[
    \begin{tikzcd}[column sep=tiny]
\cM\rtimes_\sigma\Gamma \arrow[dd,hook']  \arrow[r, "\dpg"]& \cB(\ell^2\Gamma)\ovt\cM^\Gamma\arrow[d,hook'] &(\cB(\ell^2\Gamma)\ovt\cM^\Gamma)\rtimes_{\id\otimes\alpha}\Lambda\arrow[dr,"\Psi^\alpha","\cong"']\\
&  (\cB(\ell^2\Gamma)\ovt\cM^\Gamma)\rtimes_{\talpha}\Lambda \arrow[dd,"\theta","\cong"']\arrow[ur,"\Psi^\Gamma","\cong"'] &  &\cB(\ell^2\Gamma)\ovt(\cM^\Gamma\rtimes_{\alpha}\Lambda)\arrow[dd,"\Phi","\cong"']\\
\cM\rtimes(\Gamma\times\Lambda)\arrow[ur,"\dpg\rtimes\id_\Gamma","\cong"']\arrow[dr,"\dpl\rtimes\id_\Lambda"',"\cong"] & &\\
&  (\cB(\ell^2\Lambda)\ovt\cM^\Lambda)\rtimes_{\tilde{\sigma}}\Gamma\arrow[dr,"\Psi^\Lambda"',"\cong"] &  &\cB(\ell^2\Lambda)\ovt(\cM^\Lambda\rtimes_{\sigma}\Gamma)\\
\cM\rtimes_\alpha\Lambda \arrow[uu,hook']  \arrow[r, "\dpl"]& \cB(\ell^2\Lambda)\ovt\cM^\Lambda\arrow[u,hook'] & (\cB(\ell^2\Lambda)\ovt\cM^\Lambda)\rtimes_{\id\otimes\sigma}\Gamma\arrow[ur,"\Psi^\sigma"',"\cong"]
\end{tikzcd}\]

Let $\omega_{e_\Gamma,e_\Gamma}\in \cB(\ell^2\Gamma)$ (resp. $\omega_{e_\Lambda,e_\Lambda}\in\cB(\ell^2\Lambda)$) denote the orthogonal projection onto $\C\delta_{e_\Gamma}$ (resp. $\C\delta_{e_\Lambda}$). We make the following observations:
\begin{itemize}
    \item $\Psi^\alpha(\omega_{e_\Gamma,e_\Gamma}\otimes 1)=\omega_{e_\Gamma,e_\Gamma}\otimes 1$ and $\Psi^\sigma(\omega_{e_\Lambda,e_\Lambda}\otimes 1)=\omega_{e_\Lambda,e_\Lambda}\otimes 1$.
    \item $\Psi^\Gamma(x)=x$ for all $x\in\cB(\ell^2\Gamma)\ovt\cM^\Gamma$. This is true because of how $\tilde{\alpha}$ and $\id\otimes\alpha$ are cocycle conjugate. Similarly, $\Psi^\Lambda(y)=y$ for all $y\in\cB(\ell^2\Lambda)\ovt\cM^\Lambda$.
    \item $\dpg(p)=\omega_{e_\Gamma,e_\Gamma}\otimes 1$, $\dpl(p)=\omega_{e_\Lambda,e_\Lambda}\otimes 1$, $(\dpg\rtimes\id_\Gamma)(p)=\omega_{e_\Gamma,e_\Gamma}\otimes 1$, and $(\dpl\rtimes\id_\Lambda)(p)=\omega_{e_\Lambda,e_\Lambda}\otimes 1$. These in particular imply that
    \begin{align*}
    \theta(\omega_{e_\Gamma,e_\Gamma}\otimes 1)=\dpl\rtimes\id_\Lambda((\dpg\rtimes\id_\Gamma)^{-1}(\omega_{e_\Gamma,e_\Gamma}\otimes 1))=\dpl\rtimes\id_{\Lambda}(p)=\omega_{e_\Lambda,e_\Lambda}\otimes 1.
    \end{align*}
\end{itemize}

From the observations above, we obtain that
\begin{align*}
\Phi(\omega_{e_\Gamma,e_\Gamma}\otimes 1)&=\Psi^\sigma(\Psi^\Lambda(\theta((\Psi^\Gamma)^{-1}((\Psi^\alpha)^{-1}(\omega_{e_\Gamma,e_\Gamma}\otimes 1)))))\\
&=\Psi^\sigma(\Psi^\Lambda(\theta(\omega_{e_\Gamma,e_\Gamma}\otimes 1)))\\
&=\Psi^\sigma(\Psi^\Lambda(\omega_{e_\Lambda,e_\Lambda}\otimes 1))\\
&=\omega_{e_\LL,e_\LL}\otimes 1.
\end{align*}

Therefore, we have \begin{align*}
\Phi(\cM^{\GG}\rtimes_\alpha \LL)&=\Phi((\omega_{e_\GG,e_\GG}\otimes 1)(\cB(\ell^2\Gamma)\ovt (\cM^{\GG}\rtimes_\alpha \LL))(\omega_{e_\GG,e_\GG}\otimes 1))\\
&=(\omega_{e_\LL,e_\LL}\otimes 1)(\cB(\ell^2\Lambda)\ovt (\cM^{\LL}\rtimes_\sigma \GG))(\omega_{e_\LL,e_\LL}\otimes 1)\\
&=\cM^{\LL}\rtimes_\sigma \GG
\end{align*}

\end{proof}

\bibliographystyle{amsalpha}
\bibliography{vNOE_Aoran_Ishan.bib}
\end{document}